\theoremstyle{plain}
\newtheorem{theorem}{Theorem}[section]
\newtheorem{lemma}[theorem]{Lemma}
\newtheorem{corollary}[theorem]{Corollary}
\newtheorem{proposition}[theorem]{Proposition}
\theoremstyle{remark}
\newtheorem{remark}[theorem]{Remark}
\newtheorem{eg}[theorem]{Example}
\theoremstyle{definition}
\newtheorem{definition}[theorem]{Definition}
\newcommand{\ZZ}{\mathds{Z}}
\newcommand{\CC}{\mathds{C}}
\newcommand{\QQ}{\mathds{Q}}
\title{Finite groups of matrices over quadratic number fields.}
\author{
Daniil Yurshevich
}
\address{
Faculty of Mathematics and Informatics, Jagiellonian University, Kraków, Poland}
\email{daniil.yurshevich@student.uj.edu.pl}
\begin{document}

\maketitle

\small

% \keywords{keyword 1, keyword 2, keyword 3, ...}

% \publishedin{Journal name, Vol. X, Issue Y, No. Z, pp. xx--yy, YEAR}

% \impact{JCR X.XX - QX - Position: XX/YY  - Area: Area / Subarea}

% \DOI{https://doi.org/xxxxx/xxxxx}

\begin{abstract}
    In this paper we give an algorithm to determine all finite matrix groups  over a number field. Our algorithm is based on the representation theory of finite groups.
\end{abstract}

\section*{Introduction}
The aim of this paper is to give an algorithm to compute finite matrix groups over a number field.
Study of finite matrix groups with rational coefficients begun already in XIX century. Minkowski (\cite{Minkowski}) proved that the order of a finite subgroup of $\operatorname{GL}_n(\mathbb Q)$ is bounded by the constant
	\[M(n):=\prod_{p\in\mathbb P} p^{\lfloor\frac n{p-1}\rfloor+\lfloor\frac n{p(p-1)}\rfloor+\lfloor\frac n{p^2(p-1)}\rfloor+\ldots}.\]
Schur proved that the same bound holds for the order of any finite subgroup of $\operatorname{GL}_n(\mathbb C)$ under a weaker assumption that only traces of elements are rational.

Finite subgroups of $\operatorname{GL}_2(\mathbb Q)$ are well known, subgroups of $\operatorname{GL}_3(\mathbb Q)$ where classified in \cite{Tahara} (up to \cite{AG}).
In higher dimensions primitive irreducible maximal finite subgroups and maximal irreducible finite subgroups  of $\operatorname{GL}_n(\mathbb Q)$ were studied in numerous papers (see fi. \cite{BNW1, BNW2, BNW3, Dade, KP, Nebe, pp1,pp2,pp3,pp4, pp5, pp6} and the references cited there). Current version of Magma (\cite{Magma}) contains database Rational Maximal Finite Matrix Groups of dimensions $\le 31$ (it contains in total 354 groups).

Schur (\cite{Schur}) proved a similar bound $S(n,K)$ of the order of a finite group of matrices with traces in a number field $K$. If $K$ is a number field with class number equal 1, for instance this holds for $K=\mathbb Q$, any finite subgroup of $\operatorname{GL}_n(K)$ is conjugated to a subgroup of $\operatorname{GL}_n(\mathcal{O}_K)$.

Our research was motivated by applications in algebraic geometry, many examples of Calabi-Yau manifolds were constructed as a crepant resolution of singularities of a quotient $E^n/G$ of a Cartesian power $E^n$  of an elliptic curve $E$ by a finite group $G$ acting on $E^n$ and preserving the canonical form $\omega_{E^n}$. Any finite group $G\subset \operatorname{GL}_n(\mathcal O)$ of matrices with entries in the endomorphism ring  $\mathcal O=\operatorname{End}(E)$ of $E$ acts in the obvious way on $E^n$, the action preserve the canonical form when $G$ is contained in the special linear group
$\operatorname{SL}_n(\mathcal O)$. The endomorphism ring of an elliptic curve equals $\ZZ$ (ordinary) or an order in an imaginary quadratic field (Complex Multiplication).

All examples constructed in that way for finite subgroups $G\subset \operatorname{SL}_n(\mathbb Z)$  with $n\le 3$ are well studied \cite{AW,Burek,CS}. In an arbitrary dimension $n$ we have examples with an action on $E^n$ of $\ZZ_2^{n-1}$ (for an arbitrary $E$), $\ZZ/4^{n-1}$ (for elliptic curves with $j(E)=1728$), $\ZZ/3^{n-1}$ and $\ZZ/6^{n-1}$ (for elliptic curves with $j(E)=0$) (\cite{Burek2, CH}).

Similar construction exists also for the elliptic curve $E_7$ with the Weierstrass equation
\[y^2=4x^3+21x^2+28x.\]
The curve $E_7$ has a complex multiplication by
\[\tau=\zeta_7+\zeta_7^2+\zeta_7^4 = \frac12(-1+\sqrt{-7}).\]
The matrix
\[
\begin{pmatrix}
	0&0&1\\1&0&\tau+1\\0&1&\tau
\end{pmatrix}\]
is similar to the diagonal matrix $\operatorname{diag}(\zeta_7,\zeta_7^2,\zeta_7^4)$ and acts on $(E_7)^3$.
The quotient of $E_{7}^{3}$ by the cyclic group generated by this matrix has a crepant resolution of singularities which is a Calabi-Yau threefold (for details see \cite{Roan}). Elliptic curves with complex multiplication corresponds to orders in quadratic field, by the Stark-Heegner theorem there are nine quadratic number fields with class number one and thirteen elliptic curves with CM defined over $\mathbb Q$.

The paper is organized as follows,  in section \ref{s:prelim} we collect information on representations of finite groups that we shall use in this paper. In section \ref{sec:weil}. we define the Weil restriction of scalars, in section 4. the Schur index of a character and characterize characters afforded by a representation defined over a given number field. In Section 5. we review details of the Schur bound and the reduction method. Finally, section \ref{sec:alg}. contains description of our algorithm, summary of results for quadratic fields with class number equal one and $n=3$, details of computations for $\operatorname{GL}_{3}(\QQ[\sqrt{-19}])$ and .
$\operatorname{SL}_{3}(\QQ[\sqrt{-19}])$.
\section{Preliminaries}
\label{s:prelim}
    In this section we will introduce basic notions of representation theory of finite groups. Let $G$ be a finite group.

    \begin{definition} Linear representation of the group $G$ in a vector space $V$ is a homomorphism $\rho: G \rightarrow Aut(V)$ of the group $G$ into the space of automorphisms of $V$. Equivalently representation $\rho$ is given by a family $\{\rho_g \}_{g \in G}$ s.t. $\rho_{gh} = \rho_g \rho_h$ for all $g,h \in G$
    \end{definition}

	We call a representation \textit{faithful} if it is injective as homomorphism of groups. Space $V$ is called \textit{representation space}. In this paper we will only be interested in the case where $G$ is finite and vector space is finite dimensional so let's assume further that these conditions hold.

    \begin{definition}
        Let $\rho$ and $\rho'$ be linear representations of a Group $G$ in  vector spaces $V$ and $W$, respectively. A morphism from $\rho$ to $\rho'$  is a linear map  $\tau:V\longrightarrow W$ such that for each $g\in G$ we have  $\tau\rho_g = \rho'_g \tau$. If additionally $\tau$ is an isomorphism then we say that $\rho$ and $\rho'$ are isomorphic.
    \end{definition}

    Here are some examples of representations:

    \begin{enumerate}
        \item \textit{Trivial representation}. Take $V = \mathds{C}$ and $\rho_g = 1$ for all $g \in G$.
        \item \textit{Regular representation}. Take $V = \mathds{C}^{|G|}$ and define $\rho_g$ on basis $\{ e_h\}_{h\in G}$ of $V$ as $\rho_g(e_h) = e_{gh}$ for all $g, h \in G$.
        \item \textit{Direct sum of representations}. Having two representations $\rho$ and $\rho'$ in $V$ and $W$ we define the direct sum $\rho\oplus\rho'$ follows: representation space of $\rho \oplus \rho'$ is $V \oplus W$ and $(\rho \oplus \rho') (v,w) = \rho(v) \oplus \rho'(w)$.
    \end{enumerate}

    \begin{definition}
        Let $\rho$ be a linear representation of a group $G$ in a vector space $V$. The \textit{character} of $\rho$ is the function $\chi:G\longrightarrow\CC$ defined by $\chi(g) = trace(\rho_g)$.
    \end{definition}

    Trace of endomorphisms (matrices) is not multiplicative but it satisfies the following property: $trace(AB) = trace(BA)$ for any $A,B \in Aut(V)$. Consequently for any $g,h$ we have $trace(ghg^{-1}) = trace((gh)g^{-1}) = trace(g^{-1}(gh)) = trace(h) = trace(h)$ and so $\chi_G(g)$ depends only on conjugacy class of $g$.

    \begin{definition}
        Let $G$ be a group and $f: G \rightarrow \mathds{C}$ some function. We call $f$ class function if $f(g)$ depends only on conjugacy class of $g$ for  $g \in G$.
    \end{definition}

	In particular characters of linear representations are class functions and
    characters are invariants of isomorphism classes of representations. It means that isomorphic representations have the same characters. Another property of characters is that character of direct sum of representations is the sum of characters of summands.

    Let's now introduce the scalar product of class functions (in particular of characters): if $f$ and $h$ are class functions then $\langle f, h\rangle = \sum\limits_{g \in G}f(g) \overline{h(g)}$.

    \begin{lemma}\label{lem1}
        If $\rho$ and $\rho'$ are two representations of $G$ in $V$ and $W$ resp. then $\langle \chi_\rho, \chi_{\rho'}\rangle = \dim(\operatorname{Hom}_G(V, W))$, where  $\operatorname{Hom}_G(V, W)$ are $G$-linear maps between $V$ and $W$.
    \end{lemma}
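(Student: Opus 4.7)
The plan is to identify $\operatorname{Hom}_G(V,W)$ with the subspace of $G$-invariants in $\operatorname{Hom}_\CC(V,W)$ and then compute its dimension via characters.

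First, I would endow $\operatorname{Hom}_\CC(V,W)$ with a $G$-action by $(g\cdot\phi)(v):=\rho'_g\,\phi(\rho_{g^{-1}}v)$. A direct check shows that $\phi$ is fixed by every $g\in G$ if and only if $\phi\circ\rho_g=\rho'_g\circ\phi$ for all $g$, i.e.\ if and only if $\phi$ is a morphism of representations. Hence
\[\operatorname{Hom}_G(V,W) \;=\; \operatorname{Hom}_\CC(V,W)^{G}.\]

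Next, for any finite-dimensional $G$-representation $U$, the averaging endomorphism $P:=\tfrac{1}{|G|}\sum_{g\in G}\rho_U(g)$ is a projection onto $U^G$; taking its trace yields $\dim U^G=\tfrac{1}{|G|}\sum_{g\in G}\chi_U(g)$. This short preparatory fact I would state and prove separately, then apply it to $U=\operatorname{Hom}_\CC(V,W)$.

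It remains to compute the character of $\operatorname{Hom}_\CC(V,W)$ in terms of $\chi_\rho$ and $\chi_{\rho'}$. Using the natural $G$-equivariant isomorphism $\operatorname{Hom}_\CC(V,W)\cong V^{*}\otimes W$, the character factors as $\chi_{V^{*}}\cdot\chi_{\rho'}$. Since $G$ is finite, every $\rho_g$ has finite order and its eigenvalues are roots of unity, so $\chi_{V^{*}}(g)=\chi_\rho(g^{-1})=\overline{\chi_\rho(g)}$. Combining these steps gives
\[\dim\operatorname{Hom}_G(V,W)=\frac{1}{|G|}\sum_{g\in G}\overline{\chi_\rho(g)}\,\chi_{\rho'}(g),\]
and since the left-hand side is a nonnegative integer equal to its own conjugate, the sum coincides with $\tfrac{1}{|G|}\sum_g\chi_\rho(g)\overline{\chi_{\rho'}(g)}=\langle\chi_\rho,\chi_{\rho'}\rangle$ (here I am reading the paper's bracket with the standard normalization $\tfrac{1}{|G|}$, which appears to be inadvertently omitted from the definition).

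The main obstacle is not depth but bookkeeping: one must be careful that the isomorphism $\operatorname{Hom}_\CC(V,W)\cong V^{*}\otimes W$ is used \emph{as $G$-modules} with the action above, and that passing to the dual introduces the complex conjugate in the character formula. Everything else — the projection formula for invariants and the multiplicativity of characters on tensor products — is standard.
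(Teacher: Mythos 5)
Your proof is correct and is the standard argument (identify $\operatorname{Hom}_G(V,W)$ with the $G$-invariants of $V^*\otimes W$, compute $\dim U^G$ by the trace of the averaging projector, and use $\chi_{V^*}=\overline{\chi_V}$); the paper states this lemma without proof, and the textbook treatment it implicitly relies on (Serre) proceeds exactly as you do. Your observation that the paper's displayed definition of $\langle f,h\rangle$ is missing the $\tfrac{1}{|G|}$ normalization is also correct --- without that factor the lemma as literally stated would be off by a factor of $|G|$ --- and your remark that the real-valuedness of the answer reconciles the two orders of conjugation is the right way to close that last gap.
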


\begin{definition}
	    \emph{A subrepresentation} of a given representation $\rho$ in a  vector space $V$ is representation $\rho'$ in some subspace $W$ of $V$ such that $\rho'_g = \rho_g |_W$ for all elements of $G$. We call representation \emph{irreducible} if it has no proper nontrivial subrepresentation.
\end{definition}

        \begin{theorem}[\mbox{Schur's lemma, \cite[Prop. 2.4]{Serre}}]

        Let $V$ and $W$ be vector spaces; and let $\rho_V$ and $\rho_W$ be irreducible representations of $G$ in $V$ and $W$.
        \begin{enumerate}
 \item If $V$ and $W$ are not isomorphic, then there are no nontrivial $G$-linear maps between them.
 \item If $V=W$ and $\rho_V = \rho_W$, then the only $G$-linear maps are the scalar multiples of the identity.
 \end{enumerate}

    \end{theorem}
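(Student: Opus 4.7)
The plan is to analyze an arbitrary $G$-linear map $\tau$ by exploiting the fact that its kernel and image are automatically $G$-invariant subspaces, and then invoke irreducibility to force them to be extremal.

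For part (1), I would take any $G$-linear map $\tau : V \to W$ and verify that $\ker(\tau) \subseteq V$ is $G$-stable: if $v \in \ker(\tau)$, then $\tau(\rho_V(g) v) = \rho_W(g) \tau(v) = 0$, so $\rho_V(g) v \in \ker(\tau)$. An analogous computation shows $\operatorname{Im}(\tau) \subseteq W$ is $G$-stable. Irreducibility of $\rho_V$ forces $\ker(\tau) \in \{0, V\}$ and irreducibility of $\rho_W$ forces $\operatorname{Im}(\tau) \in \{0, W\}$. If $\tau$ is nonzero, then $\ker(\tau) = 0$ and $\operatorname{Im}(\tau) = W$, so $\tau$ is a $G$-linear isomorphism, contradicting the hypothesis that $V$ and $W$ are not isomorphic as representations. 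Hence $\tau = 0$.

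For part (2), let $\tau : V \to V$ be $G$-linear. Since we work over $\CC$, the endomorphism $\tau$ admits an eigenvalue $\lambda \in \CC$. Then $\tau - \lambda \cdot \mathrm{id}_V$ is again $G$-linear (the identity commutes with every $\rho_V(g)$, and the set of $G$-linear maps is a vector space), and by definition of eigenvalue its kernel is nonzero. Applying the argument from part (1) in the special case $V = W$, a $G$-linear endomorphism with nonzero kernel cannot be an isomorphism, so it must be identically zero. Therefore $\tau = \lambda \cdot \mathrm{id}_V$.

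The main substance of the proof is the observation that $\ker$ and $\operatorname{Im}$ are $G$-subrepresentations, which is an immediate consequence of intertwining; the rest is bookkeeping. The only genuine input beyond this is the use of algebraic closedness of $\CC$ in part (2) to produce an eigenvalue, without which one would only conclude that $\operatorname{End}_G(V)$ is a division algebra over the base field rather than $\CC$ itself. This subtlety is precisely what motivates the study of Schur indices and descent over subfields of $\CC$ in the later sections of the paper.
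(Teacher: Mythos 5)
Your proof is correct and is exactly the standard argument (the one in Serre, which the paper simply cites without reproducing): kernel and image of an intertwiner are subrepresentations, irreducibility forces a nonzero intertwiner to be an isomorphism, and algebraic closedness of $\CC$ supplies the eigenvalue needed for part (2). Your closing remark correctly identifies the role of algebraic closedness and its connection to the Schur index material later in the paper.
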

    \begin{theorem}[\mbox{Maschke Theorem, \cite[Thm. 10.8]{CR}}]
        If $\rho$ is representation of $G$ and $\rho '$ is subrepresentation of $\rho$, then there is $\rho ''$ such that $\rho \cong \rho ' \oplus \rho ''$.
    \end{theorem}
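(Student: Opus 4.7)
The plan is to construct a $G$-invariant complement $W''$ to the subrepresentation space $W$ associated to $\rho'$; once we have that, setting $\rho'' := \rho|_{W''}$ immediately yields $\rho \cong \rho' \oplus \rho''$. Since $G$ is finite and we work over $\mathbb{C}$, the scalar $\tfrac{1}{|G|}$ makes sense, and this opens the door to the standard averaging trick.

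First I would use ordinary linear algebra to pick any vector-space complement $W_0$ of $W$ in $V$, and let $p \colon V \to V$ be the linear projection onto $W$ along $W_0$, so that $p^2 = p$, $\operatorname{im}(p)=W$, and $\ker(p)=W_0$. In general $p$ will not commute with the $G$-action, because $W_0$ need not be stable under $\rho$. The central step is then to replace $p$ by its $G$-average
\[
    \tilde p \;:=\; \frac{1}{|G|} \sum_{g \in G} \rho_g \, p \, \rho_g^{-1}.
\]

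Next I would verify three short properties of $\tilde p$: (i) $\operatorname{im}(\tilde p) \subseteq W$, since $W$ is $\rho$-stable and $p$ maps into $W$; (ii) $\tilde p|_W = \operatorname{id}_W$, because for $w \in W$ each term $\rho_g p \rho_g^{-1}(w)$ equals $w$, the vector $\rho_g^{-1}w$ already lying in $W$; and (iii) $\rho_h \tilde p \rho_h^{-1} = \tilde p$ for every $h \in G$, by the reindexing $g \mapsto hg$ inside the sum. Together (i) and (ii) say that $\tilde p$ is an idempotent with image exactly $W$, and (iii) says it is $G$-equivariant. Setting $W'' := \ker(\tilde p)$ then produces a $G$-stable direct-sum decomposition $V = W \oplus W''$, from which $\rho \cong \rho' \oplus \rho''$ follows.

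There is no genuine obstacle in the argument; the only delicate point is that the averaging requires $|G|$ to be invertible in the ground field, which is automatic here since we are over $\mathbb{C}$. This is precisely the place where the finiteness of $G$ and the characteristic-zero assumption are used, and it is also where the theorem would fail if one dropped them.
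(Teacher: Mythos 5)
Your proof is correct and complete: the averaging of a linear projection onto the invariant subspace, together with the three verifications (image in $W$, identity on $W$, $G$-equivariance), is the standard argument, and you correctly flag that finiteness of $G$ and invertibility of $|G|$ in the ground field are exactly what make it work. The paper itself gives no proof, citing Curtis--Reiner instead, and the cited proof is essentially this same averaging argument, so there is nothing to reconcile.
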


    \begin{corollary}\label{cor1}
        If $\rho$ is representation of $G$ then there exist $\rho_1, \rho_2, \dots, \rho_n$ irreducible representations of $G$ such that $\rho \cong  \rho_1 \oplus \dots \oplus \rho_n$
    \end{corollary}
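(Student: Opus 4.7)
The plan is to prove Corollary \ref{cor1} by strong induction on $\dim V$, where $V$ is the representation space of $\rho$. The two named ingredients that have already been assembled — Maschke's theorem and the definition of irreducibility — are exactly what such an induction needs, so I expect the argument to be short.

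For the base case I would take $\dim V = 0$ (the empty direct sum, or alternatively $\dim V = 1$, where any representation is automatically irreducible since there is no proper nontrivial subspace). For the inductive step I would fix $\rho$ acting on $V$ with $\dim V \geq 1$ and split into two cases. If $\rho$ is irreducible then the decomposition $\rho \cong \rho$ (a single summand) is already of the required form. Otherwise, by definition of reducibility there exists a proper nontrivial subrepresentation $\rho'$ of $\rho$ on some subspace $W \subsetneq V$ with $0 < \dim W < \dim V$. Applying Maschke's theorem to this pair produces a complement, i.e.\ a representation $\rho''$ on a subspace of $V$ of dimension $\dim V - \dim W$ such that $\rho \cong \rho' \oplus \rho''$.

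At this point both $\rho'$ and $\rho''$ act on spaces of strictly smaller dimension than $V$, so the inductive hypothesis applies to each of them: we obtain decompositions $\rho' \cong \sigma_1 \oplus \cdots \oplus \sigma_k$ and $\rho'' \cong \tau_1 \oplus \cdots \oplus \tau_l$ into irreducibles. Concatenating these two lists and invoking associativity of the direct sum of representations (which follows immediately from associativity of the direct sum of vector spaces together with the blockwise definition of $(\rho \oplus \rho')_g$) yields $\rho \cong \sigma_1 \oplus \cdots \oplus \sigma_k \oplus \tau_1 \oplus \cdots \oplus \tau_l$, which is the desired decomposition.

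There is no genuine obstacle here — the only point that requires any care is confirming that the subrepresentation $\rho'$ produced by failure of irreducibility truly has $0 < \dim W < \dim V$, so that the induction is well-founded. This is exactly the content of the word \emph{proper} combined with \emph{nontrivial} in the definition of irreducibility, so the induction goes through without incident.
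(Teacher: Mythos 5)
Your proof is correct and follows exactly the argument the paper intends: the corollary is stated as an immediate consequence of Maschke's theorem, and the standard induction on $\dim V$ (split off a proper nontrivial subrepresentation, take the complement Maschke provides, and recurse on both pieces) is precisely that implicit argument. Nothing is missing.
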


    \begin{corollary}\label{cor2}
        If $\rho$ is irreducible representation and $\rho'$ is some representation of $G$ then number of $\rho_i$ isomorphic to $\rho$ in decomposition if $\rho'$ is equal to $\langle \chi_\rho, \chi_{\rho'}\rangle$.
    \end{corollary}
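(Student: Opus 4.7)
The plan is to combine the three tools immediately preceding the statement: the decomposition into irreducibles (Corollary \ref{cor1}), the bilinearity of the scalar product via characters of direct sums, and Schur's lemma together with Lemma \ref{lem1} to evaluate the resulting inner products of irreducible characters.

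First I would apply Corollary \ref{cor1} to write $\rho' \cong \rho_1 \oplus \cdots \oplus \rho_n$ with each $\rho_i$ irreducible, on the representation space $W = W_1 \oplus \cdots \oplus W_n$. Since the character of a direct sum is the sum of the characters of the summands, we get $\chi_{\rho'} = \chi_{\rho_1} + \cdots + \chi_{\rho_n}$, and consequently, by linearity of the scalar product in the second argument,
\[
\langle \chi_\rho, \chi_{\rho'}\rangle = \sum_{i=1}^{n} \langle \chi_\rho, \chi_{\rho_i}\rangle.
\]

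Next I would evaluate each term $\langle \chi_\rho, \chi_{\rho_i}\rangle$. By Lemma \ref{lem1}, this equals $\dim \operatorname{Hom}_G(V, W_i)$, where $V$ is the representation space of $\rho$. Since both $\rho$ and $\rho_i$ are irreducible, Schur's lemma tells us that $\operatorname{Hom}_G(V, W_i)$ is zero-dimensional when $\rho \not\cong \rho_i$ and one-dimensional when $\rho \cong \rho_i$ (the second part of Schur's lemma, applied after choosing any isomorphism $V \to W_i$, shows that every $G$-linear map is a scalar multiple of this fixed isomorphism). Hence $\langle \chi_\rho, \chi_{\rho_i}\rangle$ equals $1$ precisely when $\rho_i \cong \rho$ and $0$ otherwise.

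Summing these contributions, the total $\langle \chi_\rho, \chi_{\rho'}\rangle$ is exactly the number of indices $i$ for which $\rho_i \cong \rho$, which is the multiplicity sought. The proof is essentially bookkeeping once Lemma \ref{lem1} and Schur's lemma are in hand; the only subtle point is applying Schur's lemma in the form \emph{isomorphic} (not strictly equal) irreducibles, which requires fixing an intertwiner $V \to W_i$ and then invoking part (2) of the lemma in $\operatorname{End}_G(V)$ — a routine but necessary step.
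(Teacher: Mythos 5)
Your proof is correct and follows exactly the route the paper intends: the paper states this corollary without proof, immediately after Lemma \ref{lem1}, Schur's lemma, and Corollary \ref{cor1}, and the implicit argument is precisely your combination of the direct-sum decomposition, linearity of the scalar product, and the evaluation $\dim\operatorname{Hom}_G(V,W_i)\in\{0,1\}$ via Schur. Your remark about applying part (2) of Schur's lemma to isomorphic (rather than equal) irreducibles by fixing an intertwiner is a correct and worthwhile clarification, but it does not change the approach.
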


    This decomposition is unique up to permutation of $\rho_i$ and isomorphism of the summands. More precisely if $\oplus_{i=1}^n\rho_i \cong \oplus_{i=1}^m\rho'_i$ for irreducible representations $\rho_{i}, \ i \in \{1, \dots , n\}$ and $\rho'_{i}, \ i \in \{1, \dots , m\}$ then $n=m$ and there exist permutation $\sigma$ of $\{1, \dots, n \}$ such that $\rho_i = \rho_{\sigma(i)}$.

    Now, by property of character of direct sum and  Corollary \ref{cor1} character of every representation is equal to the sum of characters of irreducible representations with non-negative integer coefficients. Moreover coefficients can be calculated using scalar product of characters. So it is enough to study characters of irreducible representation's.

    \begin{corollary}\label{cor3}
        The number of irreducible representations of $G$ is finite for every finite group $G$.
    \end{corollary}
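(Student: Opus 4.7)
The plan is to exhibit every irreducible representation of $G$ as a summand of a single finite-dimensional representation, namely the regular representation, and then invoke Corollary \ref{cor1} to bound the number of isomorphism classes.

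First I would recall that the regular representation $R$ acts on $\CC[G]=\CC^{|G|}$, which is finite-dimensional, so by Corollary \ref{cor1} it decomposes as $R\cong \sigma_1\oplus\cdots\oplus\sigma_N$ for some finite list of irreducible subrepresentations. The whole argument reduces to showing that every irreducible representation of $G$ appears, up to isomorphism, somewhere in this list.

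The key step is the following construction. Let $\rho$ be any irreducible representation of $G$ on a space $V$, and pick any nonzero vector $v\in V$. Define a linear map $\phi\colon \CC[G]\longrightarrow V$ on the basis $\{e_h\}_{h\in G}$ by $\phi(e_h)=\rho_h(v)$ and extend linearly. A direct check using $R_g(e_h)=e_{gh}$ gives
\[
\phi(R_g(e_h))=\phi(e_{gh})=\rho_{gh}(v)=\rho_g\rho_h(v)=\rho_g(\phi(e_h)),
\]
so $\phi\in\operatorname{Hom}_G(\CC[G],V)$, and $\phi(e_e)=v\ne 0$, so $\phi$ is nonzero. Hence $\dim\operatorname{Hom}_G(\CC[G],V)\ge 1$, which by Lemma \ref{lem1} means $\langle\chi_R,\chi_\rho\rangle\ge 1$, and by Corollary \ref{cor2} at least one of the $\sigma_i$ is isomorphic to $\rho$.

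Combining these observations, every isomorphism class of irreducible representations of $G$ occurs among the $N\le|G|$ summands $\sigma_1,\ldots,\sigma_N$, so there are only finitely many such classes. I do not expect any genuine obstacle: the only place where something must actually be verified is the $G$-equivariance of the map $\phi$, and once that is in place the conclusion follows immediately from the results already proved. The slightly subtle point worth mentioning in the write-up is that ``the number of irreducible representations'' is meant up to isomorphism, since otherwise the statement is literally false.
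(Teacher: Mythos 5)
Your proof is correct and is essentially the argument the paper relies on: the paper states this corollary without proof, but it invokes the same fact later (``every irreducible representation of a finite group is a component of its regular representation''), and your explicit construction of the nonzero $G$-map $\phi\colon\CC[G]\to V$ combined with Lemma \ref{lem1} and Corollaries \ref{cor1}--\ref{cor2} fills in that standard argument correctly. Your closing remark that the count is up to isomorphism is the right reading of the statement.
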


%    \begin{proof}
%        Note that for regular representation of $G:$ $\chi_g = |G|$ if $g = e$ and $0$ otherwise. Now for each irreducible $\rho$:
%        $$\langle \chi_\rho, \chi_{reg}\rangle = \chi_\rho(e) = dim(V_\rho) > 0, $$
%        where $V$ is representation space of $\rho$, so the number of irreducible representations of $G$ is not bigger than $|G|$.
%    \end{proof}

    \begin{theorem}\label{the1}
       Let $\chi_1, \ldots \chi_n$ be characters of all irreducible representations of $G$ (by Corollary \ref{cor3} the number of irreducible representations if finite). Then $\langle \chi_i, \chi_j \rangle = 0$ if $i \neq j$ and $\langle \chi_i, \chi_j \rangle = 1$ otherwise, i.e. characters of irreducible representations form orthonormal system in the space of class functions on $G$. Moreover, they form orthonormal basis of class functions on $G$, i.e. every class function can be written as linear combination of characters of irreducible representations with complex coefficients.
    \end{theorem}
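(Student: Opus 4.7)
The plan is to split the argument into two parts: orthonormality and spanning. For orthonormality, I would directly apply Lemma \ref{lem1} together with Schur's lemma. Let $\rho_i$ and $\rho_j$ be the irreducible representations affording $\chi_i$ and $\chi_j$. Then $\langle\chi_i,\chi_j\rangle=\dim\operatorname{Hom}_G(V_i,V_j)$. If $i\neq j$ the representations are non-isomorphic, so by part (1) of Schur's lemma this dimension is $0$; if $i=j$, part (2) of Schur's lemma says the $G$-linear endomorphisms are precisely the scalars, giving dimension $1$. Hence $\langle\chi_i,\chi_j\rangle=\delta_{ij}$.

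The harder part is the spanning statement. I would show that any class function $f$ orthogonal to every irreducible character is identically zero. Given such an $f$, for any representation $\rho$ of $G$ on a space $V$ I would form the operator
\[
T_\rho=\sum_{g\in G}\overline{f(g)}\,\rho_g\in\operatorname{End}(V).
\]
Using the class-function property of $f$ one checks (by a substitution $g\mapsto h^{-1}gh$ in the sum) that $T_\rho$ commutes with $\rho_h$ for every $h\in G$, so $T_\rho$ is a $G$-linear endomorphism of $V$.

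Now suppose $\rho$ is irreducible. By part (2) of Schur's lemma, $T_\rho=\lambda_\rho\cdot\operatorname{id}_V$ for some scalar $\lambda_\rho$. Taking traces gives
\[
\lambda_\rho\dim V=\sum_{g\in G}\overline{f(g)}\chi_\rho(g)=\overline{\langle f,\chi_\rho\rangle}=0
\]
by the assumption on $f$, so $T_\rho=0$ for every irreducible $\rho$. By Corollary \ref{cor1} the regular representation decomposes as a direct sum of irreducibles, so $T$ vanishes there as well. Applying it to the basis vector $e_1$ (associated with the identity of $G$) and using the defining formula $\rho_g(e_1)=e_g$ yields
\[
0=T(e_1)=\sum_{g\in G}\overline{f(g)}\,e_g,
\]
which by linear independence of the $e_g$ forces $f\equiv 0$. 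The orthogonal complement of the span of the irreducible characters inside the space of class functions is therefore trivial, so these characters form a basis.

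The main obstacle I anticipate is the second part: the construction of $T_\rho$ and the correct bookkeeping of the complex conjugate in the inner product. The idea of reducing everything to the regular representation via Schur's lemma is the essential trick; once $T_\rho=0$ on irreducibles is established, the conclusion on the regular representation and the evaluation at $e_1$ are routine.
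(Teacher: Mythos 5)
Your proof is correct and complete. The paper states this theorem without proof (it is the classical orthogonality and completeness result, cf.\ \cite{Serre}), and your argument --- orthonormality via Lemma \ref{lem1} combined with Schur's lemma, and completeness by associating to a class function $f$ orthogonal to all irreducible characters the $G$-equivariant operator $T_\rho=\sum_{g}\overline{f(g)}\rho_g$, forcing it to vanish on every irreducible and hence on the regular representation --- is exactly the standard proof. One minor remark: the paper's displayed definition of the scalar product omits the factor $\tfrac{1}{|G|}$, which is needed for Lemma \ref{lem1} and for $\langle\chi_i,\chi_i\rangle=1$ as stated; your computation is consistent with the normalized product these statements implicitly assume, and the missing constant does not affect the vanishing argument in your second part.
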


    \begin{corollary}\label{cor4}
        The number of irreducible representations of every finite group $G$ is equal to class number of $G$.
    \end{corollary}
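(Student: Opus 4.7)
The plan is to deduce this immediately from Theorem \ref{the1} together with an elementary dimension count of the space of class functions. The key observation is that a class function on $G$ is determined by its values on conjugacy classes, so if $C_1,\ldots,C_h$ are the conjugacy classes of $G$ (where $h$ is the class number), the indicator functions $\mathbf 1_{C_1},\ldots,\mathbf 1_{C_h}$ form a basis of the space of class functions. Thus this space has dimension exactly $h$.

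Next I would invoke Theorem \ref{the1}: the characters $\chi_1,\ldots,\chi_n$ of the pairwise non-isomorphic irreducible representations form an orthonormal basis of the space of class functions. Since the cardinality of any basis of a finite-dimensional vector space equals its dimension, we conclude $n=h$.

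Finally, I would note that counting irreducible characters is the same as counting isomorphism classes of irreducible representations, because by Corollary \ref{cor2} (or the remark on uniqueness of decomposition stated right after it) two irreducible representations having the same character must be isomorphic: indeed, if $\chi_\rho=\chi_{\rho'}$ then $\langle \chi_\rho,\chi_{\rho'}\rangle = \langle \chi_\rho,\chi_\rho\rangle = 1$, so by Schur's lemma and Lemma \ref{lem1} there is a nonzero (hence, by irreducibility, invertible) $G$-linear map between them. This closes the argument.

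There is no real obstacle here; the only subtlety is making sure one is counting isomorphism classes rather than characters, which is handled by the last step above. The substantive work has already been absorbed into Theorem \ref{the1}.
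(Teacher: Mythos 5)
Your proof is correct and is exactly the intended derivation: the paper states this as an immediate consequence of Theorem \ref{the1}, namely that the irreducible characters form an orthonormal basis of the space of class functions, whose dimension is the number of conjugacy classes. Your extra care in identifying isomorphism classes of irreducible representations with distinct characters is a welcome (and correct) clarification, but it does not change the route.
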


%    \begin{proof}
%        Dimension of the space of class function of $G$ is obviously equal to class number. Now corollary follows from \ref{the1}.
%    \end{proof}

    From corollary \ref{cor4} it follows that we can form a $c \times c$ matrix, where $c$ is class number of $G$, whose rows represent characters of irreducible representations. This matrix is called \textit{character table}.

	Decomposition of a representation of a finite group into a direct sum of irreducible representations is not unique, however unique is the ``canonical decomposition'' obtained by collecting together isomorphic irreducible representations.

       Let $\rho$ be a representation of a finite group $G$. Let $\chi_{1},\dots,\chi_{c}$ be distinct characters of irreducible representations $\rho_{1},\dots,\rho_{c}$ of $G$.
       Consider any decomposition $\rho=\tau_{1}\oplus\cdots\oplus\tau_{m}$ of $\rho$ into a direct sum of irreducible subrepresentations. Denoting by
       $\rho_{i}=\bigoplus_{\chi_{\tau_{i}}=\chi_{i}}\tau_{i}$ the direct sum of representations $\tau_{j}$'s with the character equal $\chi_{i}$ we get decomposition
       \begin{equation}
       	\rho=\rho_{1}\oplus\cdots\oplus\rho_{s}.\label{eq:canon}
       \end{equation}
\begin{definition}
	We shall call the decomposition \eqref{eq:canon} the \emph{canonical decomposition} of the representation $\rho$.
\end{definition}

     \begin{theorem}[\text{\cite[Thm. 8]{Serre}}]\label{the1}
     Let \[\rho=\rho_{1}\oplus\cdots\oplus\rho_{s}\] be a canonical decomposition of a representation $\rho$ of a finite group $G$.
       For  $i=1,\dots,s$ the representation
       $\rho_{i}$ is uniquely determined, in particular it doesn't depend on the choice of the initially chosen decomposition of $\rho$ into irreducible subrepresentations.

       More precisely the projection of $\rho$ onto $\rho_{i}$ is given by:

\[
    \frac{n_{i}}{|G|}\sum\limits_{g \in G} \overline{\chi_{i}(g)}\rho(g).
\]
where $n_{i}$ is the degree of representation $\rho_{i}$.

In particular if representation $\rho$ is defined over $K$ then $V_\rho$ will be defined over $K$ as well.
\end{theorem}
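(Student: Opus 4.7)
The plan is to prove the two assertions (uniqueness and the projection formula) at once by showing that the endomorphism
\[ p_i := \frac{n_i}{|G|}\sum_{g\in G}\overline{\chi_i(g)}\,\rho(g) \in \operatorname{End}(V) \]
is the projection of $V$ onto the isotypic component $V_i := \bigoplus_{\chi_{\tau_j}=\chi_i}\tau_j$ attached to any irreducible decomposition $\rho=\tau_1\oplus\cdots\oplus\tau_m$. Since the defining formula for $p_i$ involves only $\rho$, $\chi_i$ and $|G|$, no refinement into irreducibles enters its definition, so $V_i$ is determined intrinsically by $\rho$ and therefore coincides with the canonical summand $\rho_i$ of \eqref{eq:canon} independently of the chosen decomposition.

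First I would verify that $p_i$ is $G$-equivariant by conjugating with $\rho(h)$ and using that $\chi_i$ is a class function to re-index the sum. Now fix any decomposition $\rho=\tau_1\oplus\cdots\oplus\tau_m$ into irreducibles. Each subspace $\tau_j\subseteq V$ is stable under every $\rho(g)$, hence under $p_i$, so $p_i|_{\tau_j}\in\operatorname{End}_G(\tau_j)$. Schur's lemma forces $p_i|_{\tau_j}=\lambda_{ij}\,\operatorname{id}_{\tau_j}$ for some scalar $\lambda_{ij}$, and taking traces gives
\[ \deg(\tau_j)\,\lambda_{ij} = \frac{n_i}{|G|}\sum_{g\in G}\overline{\chi_i(g)}\,\chi_{\tau_j}(g). \]
By the orthonormality of irreducible characters the right-hand side equals $n_i$ when $\chi_{\tau_j}=\chi_i$ (in which case $\deg(\tau_j)=n_i$ and $\lambda_{ij}=1$) and equals $0$ otherwise. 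Hence $p_i$ acts as the identity on every summand with character $\chi_i$ and as zero on all the others, so $\operatorname{Im}(p_i)=V_i$ and $p_i$ is the asserted projection.

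For the final assertion on rationality I would note that when $\chi_i$ takes values in $K$ the formula for $p_i$ has $K$-rational coefficients; writing $\rho(g)$ in a $K$-basis then yields a matrix in $M_n(K)$, and its image $\operatorname{Im}(p_i)$ is automatically a $K$-subspace of $V$. I expect the genuine subtlety---and the only step that is not mechanical---to occur when $\chi_i\notin K$: there one must replace $\chi_i$ by the sum of its $\operatorname{Gal}(K(\chi_i)/K)$-conjugates to obtain a $K$-rational projector onto a single Galois orbit of isotypic components, and then verify that after base change to a splitting field this grouping refines compatibly to the canonical decomposition. In the concrete number-field setting of the paper the characters of interest will typically already lie in $K$, so this Galois-theoretic bookkeeping does not obstruct the main line of argument.
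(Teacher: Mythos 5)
Your proof is correct and is essentially the argument of the cited source (Serre, Thm.~8), which the paper itself does not reproduce: check $G$-equivariance of $p_i$ via the class-function property, restrict to each irreducible summand, apply Schur's lemma, and evaluate the scalar by taking traces and using orthogonality of irreducible characters. Your closing remark is also well taken: the final ``defined over $K$'' assertion really does require $\chi_i$ to take values in $K$ (the projector $p_i$ a priori has entries only in $K(\chi_i)$), a caveat the paper's own Example~\ref{ex17} illustrates and which its Corollary glosses over.
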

\begin{definition}
A representation $\rho:G\longrightarrow\operatorname{GL}(V)$ of  a finite group $G$ is \emph{realizable over a field} $K\subset\CC$ iff  there exists a basis of $V$ such that matrices of images of all elements of $G$ have coefficients in $K$.

Equivalently, a representation $\rho:G\longrightarrow\operatorname{GL}(V)$  is realizable over field $K\subset\CC$ iff there exists a homomorphism $\tau:G\longrightarrow\operatorname{GL}(W)$, where $W$ is a vector space over $K$ with an isomorphism $\psi:W\otimes_{K}\CC \longrightarrow V$ such that
\[\rho_{g} = \psi\circ\tau_{g}\otimes_{K}\CC\circ\psi^{-1}.\]

\end{definition}
\begin{corollary}
	If a representation $\rho$ of a finite group is realizable over a field $K\subset\CC$ then all components of the canonical decomposition of $\rho$ are also defined over $K$.
\end{corollary}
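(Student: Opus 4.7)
The plan is to apply the projection formula of Theorem \ref{the1} together with a Galois-descent argument. Let $W$ be a $K$-vector space realizing $\rho$, so that $V=W\otimes_{K}\CC$ and every $\rho(g)$ has matrix entries in $K$. By Theorem \ref{the1}, the projector onto the isotypic component $V_{i}\subseteq V$ is
\[
p_{i} \;=\; \frac{n_{i}}{|G|}\sum_{g\in G}\overline{\chi_{i}(g)}\,\rho(g),
\]
a $\CC$-linear combination of the $K$-rational operators $\rho(g)\in\mathrm{End}_{K}(W)$. So $p_{i}$ is already $K$-rational when every value $\overline{\chi_{i}(g)}$ lies in $K$; in general its coefficients live only in the character field $K(\chi_{i})$.

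To cover the general case, I would invoke the action of $\mathrm{Gal}(\overline{K}/K)$ on characters by post-composition, $\sigma\cdot\chi_{i}:=\sigma\circ\chi_{i}$, which permutes the index set $\{1,\dots,s\}$. Because each $\rho(g)$ is Galois-fixed, a direct computation gives $\sigma(p_{i})=p_{\sigma\cdot i}$. Summing over the Galois orbit $[i]$ of $i$ therefore yields
\[
P_{[i]} \;:=\; \sum_{j\in[i]} p_{j},
\]
a Galois-invariant operator whose matrix has entries in $K$ and which preserves $W$. Its image $\bigoplus_{j\in[i]}V_{j}$ is then the base change of the $K$-subspace $P_{[i]}(W)\subseteq W$, hence defined over $K$ in the required sense.

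This proves the corollary. The components of the canonical decomposition over $K$ are the $K$-isotypic blocks of $\rho$, i.e.\ the Galois-orbit sums $\bigoplus_{j\in[i]}V_{j}$; with this reading each component descends to $K$, and in the special case that $K$ already contains every character value every orbit $[i]$ is a singleton and the individual $V_{i}$ themselves descend. The main conceptual step is the equivariance identity $\sigma(p_{i})=p_{\sigma\cdot i}$ and the ensuing application of Galois descent for subspaces of $W\otimes_{K}\overline{K}$. The only real subtlety is that, for a general $K$, the literal $\CC$-isotypic components $V_{i}$ need not individually descend (for example the two eigenlines inside the standard $\ZZ/3$-representation on $\QQ^{2}$ descend only after adjoining $\zeta_{3}$); what always descends is precisely the Galois-orbit coarsening, which coincides with the natural decomposition of $W$ into $K[G]$-isotypic components guaranteed by Maschke's theorem over~$K$.
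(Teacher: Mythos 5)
Your argument is correct, and it is in fact more careful than what the paper itself offers: the paper deduces this corollary directly from the projection formula $p_i=\tfrac{n_i}{|G|}\sum_{g\in G}\overline{\chi_i(g)}\,\rho(g)$, tacitly treating $p_i$ as a $K$-rational operator, whereas — as you observe — its coefficients a priori lie only in the character field $K(\chi_i)$. So the paper's implicit proof establishes the corollary only under the extra hypothesis that $K$ contains the character values of the irreducible constituents of $\rho$. Your Galois-orbit coarsening, via the equivariance $\sigma(p_i)=p_{\sigma\cdot i}$ and descent of the invariant projector $P_{[i]}$, is therefore a genuine repair rather than a cosmetic variant: taken literally (with the canonical decomposition meaning the $\CC$-isotypic decomposition indexed by the distinct irreducible characters), the corollary is false, and the paper's own Example \ref{ex17} is a counterexample — the order-$3$ matrix acting on $\QQ^2$ has as its two isotypic components the $\zeta_3$- and $\zeta_3^2$-eigenlines, which do not descend to $\QQ$, even though the paper presents that example only as showing failure for \emph{irreducible} components (here the irreducible and isotypic components coincide, since each character occurs with multiplicity one). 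The two readings you distinguish are exactly the two correct statements available: if $\chi_i(G)\subset K$ for every constituent, then each $V_i$ descends by the $K$-rationality of $p_i$; for general $K$, what descends is the Galois-orbit sum $\bigoplus_{j\in[i]}V_j$, equivalently the $K[G]$-isotypic decomposition of $W$. Your write-up establishes both, which is more than the paper does; the only point worth adding is a one-line justification that $\sigma\circ\chi_i$ is again an irreducible character of the same degree and that $\sigma$ commutes with complex conjugation on the (abelian, cyclotomic) character field, which is what makes $\sigma(p_i)=p_{\sigma\cdot i}$ legitimate.
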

The above corollary doesn't hold for irreducible components of a representation, in fact every irreducible representation of a finite group is a component of its regular representation which is defined over $\QQ$.

\begin{eg}\label{ex17}
	The matrix $M:=\begin{pmatrix}
		0&1\\-1&-1
	\end{pmatrix}$ generates a subgroup of $\operatorname{GL}_{3}(\CC)$ of order 3, so it defines a degree 2 representation of the group  $G:=C_{3}$ which is realizable over $\QQ$. This representation is a direct sum of two degree one representations determined by the degree 3 primitive roots of unity $\zeta_{3}$ and $\zeta_{3}^{3}$, which are clearly not realizable over $\QQ$.
\end{eg}
\begin{eg}\label{ex18}
	The group generated by matrices
	$\begin{pmatrix}
		0&-1\\1&0
	\end{pmatrix}, \begin{pmatrix}
	i&0\\0&-i
	\end{pmatrix}$ is the quaternion $Q_{8}$ hence it defines a faithful representation $\rho$ of $Q_{8}$ of degree 2 which is not realizable over $\QQ$, in fact this representation is realizable over a field $K\subset \CC$ iff -1 is a sum of two squares in $K$ (see \cite[exerc. 12.3]{Serre}).
	On the other hand the representation $\rho^{\oplus2}$ is realizable over $\QQ$,
	more precisely the matrices
	$\begin{pmatrix}
		0&0&-1&0\\0&0&0&-1\\1&0&0&0\\0&1&0&0
	\end{pmatrix}, \begin{pmatrix}
	0&-1&0&0\\1&0&0&0\\0&0&0&1\\0&0&-1&0
	\end{pmatrix}$ generate group isomorphic to $Q_{8}$ and it determines a representation of $Q_{8}$ isomorphic to $\rho^{\oplus2}$ which is defined over $\QQ$.
\end{eg}
\section{Weil Restriction of scalars}
\label{sec:weil}

\begin{definition}
    A splitting field of a finite group $G$ is a field $K\subset\CC$ such that every representation of $G$ over $\mathds{C}$ can be realized over $K$, equivalently every irreducible representation can be realized over $K$.
\end{definition}

Clearly, a splitting field of a group $G$ contains the field generated by character value. A splitting field of a finite group  is not unique, in fact there can be several distinct minimal splitting fields (cf. Ex. 18).
The Brauer Theorem (\cite[Thm. 24]{Serre}) gives a sufficient condition for a splitting field
\begin{theorem}[Brauer Theorem, \mbox{\cite[Thm. 24]{Serre}}]
	Let $m$ be the exponent of a finite group $G$. If a field $K\subset\CC$ contains a primitive root of unity of degree $m$ then $K$ is a splitting field of $G$.
\end{theorem}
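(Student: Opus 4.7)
The plan is to derive the statement from two deeper results: Brauer's induction theorem on characters, and the divisibility property of the Schur index (both of which will be treated more carefully in the paper's later section).

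First I would observe that every character of $G$ automatically takes values in $K$. Indeed, every $g\in G$ satisfies $g^{m}=1$, so the eigenvalues of any matrix $\rho(g)$ are $m$-th roots of unity, and hence $\chi_{\rho}(g)$ is a sum of such roots and lies in $\QQ(\zeta_{m})\subseteq K$.

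The heart of the argument is Brauer's induction theorem, which asserts that every character $\chi$ of $G$ admits a decomposition
\[\chi=\sum_{i} n_{i}\operatorname{Ind}_{H_{i}}^{G}(\psi_{i}),\quad n_{i}\in\ZZ,\]
where each $H_{i}$ is an \emph{elementary} subgroup of $G$ and each $\psi_{i}$ is a one-dimensional character of $H_{i}$. Since $H_{i}\le G$ its exponent divides $m$, so each $\psi_{i}$ takes values in the group of $m$-th roots of unity, which lies in $K$. In particular each $\psi_{i}$ is realizable over $K$ as a one-dimensional representation, and induction preserves $K$-realizability (the standard coset construction of $\operatorname{Ind}_{H}^{G}\tau$ uses only entries of matrices of $\tau$ together with the zero and one entries of permutation matrices). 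Thus each $\theta_{i}:=\operatorname{Ind}_{H_{i}}^{G}(\psi_{i})$ is afforded by an honest $K$-representation.

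To promote this expression, which is a priori only an equality of virtual characters, to an actual $K$-realization of a given irreducible $\chi$, I would use the Schur index $m_{K}(\chi)$, defined as the smallest positive integer $d$ such that $d\cdot\chi$ is afforded by a $K$-representation. The key property (to be established in the later section on the Schur index) is that for any irreducible $\chi$ taking values in $K$ and any character $\theta$ of a $K$-representation, $m_{K}(\chi)$ divides the inner product $\langle\theta,\chi\rangle$. Applied to each $\theta_{i}$ above, together with orthonormality of irreducible characters, this gives
\[1=\langle\chi,\chi\rangle=\sum_{i}n_{i}\langle\theta_{i},\chi\rangle,\]
so $m_{K}(\chi)$ divides $1$; hence $m_{K}(\chi)=1$ and $\chi$ itself is afforded by a $K$-representation.

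The main obstacle is simply that the two imported ingredients are deep. Brauer's induction theorem is the principal technical input and is not easy to prove compactly from scratch; the Schur index divisibility serves as the bridge between virtual and honest $K$-realizability and relies on the structure of $K$-irreducible representations decomposing over $\CC$ into $m_{K}(\chi)$ copies of $\chi$. Once both are accepted, the argument above is essentially one inner product computation.
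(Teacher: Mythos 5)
Your argument is correct, and the paper itself offers no proof of this theorem --- it is imported verbatim as Theorem 24 of the cited reference (Serre), whose proof is exactly the one you give: Brauer induction from linear characters of elementary subgroups (all of which land in $\QQ(\zeta_{m})\subseteq K$), followed by the Schur-index divisibility $m_{K}(\chi)\mid\langle\theta,\chi\rangle$ and the inner-product computation forcing $m_{K}(\chi)=1$. So your proposal matches the intended (cited) proof; the only caveat is that both imported ingredients are themselves substantial theorems, which is presumably why the paper states the result by reference rather than proving it.
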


Let  $K \subset L$ be a field extension of degree $m:=[L:K]$.
Any representation
$\rho:G\longrightarrow \operatorname{GL}(V)$
of a finite group $G$ realizable over field $L$ is determined by a homomorphism
$\tau:G\longrightarrow \operatorname{GL}(W)$
for a vector space $W$ over $L$ with an isomorphism $\psi:W\otimes_{L}\CC\longrightarrow V$. The vector space $W$ has a natural structure of a vector space over $K$, which we shall denote by $W_{K}$. Then the group of automorphisms of $W_{K}$, which is equal to the group of $K$-linear automorphisms of $W$, contains the group of automorphisms of $W$
\[\operatorname{GL}(W)\subset \operatorname{GL}_{K}(W) = \operatorname{GL}(W_{K}).\]
Consequently, the homomorphism $\tau$ induces a homomorphism
$\tau:G\longrightarrow \operatorname{GL}(W)$ and hence a representation
\[\operatorname{Res}_{L/K}\rho: G\longrightarrow V_{K}, \ \ \text{ where } V_{K} = W\otimes _{K}\CC.\]

\begin{definition}
	The representation $\operatorname{Res}_{L/K}\rho$ is called \emph{the Weil restriction of scalars} of the representation $\rho$ from $L$ to $K$.
\end{definition}
Since $V_{K}= W\otimes_{K}\CC\cong W\otimes_{K}(L\otimes _{L}\CC)\cong (W\otimes_{K}L)\otimes _{L}\CC\cong (L\otimes_{K}W)\otimes _{L}\CC\cong L\otimes_{K}(W\otimes _{L}\CC)\cong L\otimes _{K}V$ we get
\[\dim_{\CC}V_{K} = [L:K]\dim_{\CC}V,\]
which means that the degree of the Weil restriction of a representation $\rho$ equals the degree of the original representation $\rho$ multiplied by the degree of the fields extension.

Weil restriction can be also described more explicitly at the level of matrix representation. Let $M\in \operatorname{GL}_{n}(L)$ be an $n\times n$  invertible matrix with entries in $L$, it defines an automorphism $\phi_{M}:L^{n}\longrightarrow L^{n}$. The field $L$ is a $K$-vector space of dimension $m$, choice of a basis of $L$ over $K$ yields an isomorphism between $L$ and   $K^{m}$. Since $\phi_{M}$ is also an automorphism of vector spaces over $K$ it determines an automorphism $\phi_{M}^{K}:K^{mn}\longrightarrow K^{mn}$ and hence also an invertible $mn\times mn$ matrix $\operatorname{Res}_{L/K}\in\operatorname{GL}_{mn}(K)$. Definition of $\operatorname{Res}_{L/K}$ depends on a choice of a basis of $L^{n}$ as a vector space over $K$ - which is equivalent to a choice of an isomorphism of $L^{n}$ and $K^{mn}$.

\begin{definition}
	The matrix $\operatorname{Res}_{L/K}(M)$ is called the Weil restriction of scalars of $M$ from $L$ to $K$.
\end{definition}

Assume then $\{b_{1},\dots,b_{m}\}$ is an ordered base of $L/K$ and $v_{1},\dots,v_{n}$ is the natural basis of $V$ over $L$. Then the set of products $(b_{i}v_{j})_{i=1,\dots,m,j=1,\dots,n}$ is a basis of $V/K$.
Assume that the matrix of an endomorphism $F\in\operatorname{End}(V)$ in the chosen base equals $A=(a_{ij})_{i,j=1,\dots,n}$.
Now, $F(b_{i}v_{j}) = b_{i}F(v_{j}) = \sum_{p=1}^{n}a_{p,j}b_{i}v_{p}$.

For each element $a \in L$, multiplication by $a$
\[L\ni x\longmapsto ax\in L\]
is a $K$-endomorphism of $L$, denote by $\operatorname{Res}_{L/K}(a)$ the matrix of this endomorphism in the base $b_{i}$. Then
\[a_{p,j}b_{i} = \sum_{q=1}^{m} \operatorname{Res}_{L/K}(a_{p,j})_{q,i}b_{q}\]
and finally
\[F(b_{i}v_{j}) = \sum_{p=1}^{n}\sum_{q=1}^{m} \operatorname{Res}_{L/K}(a_{p,j})_{q,i}b_{q}v_{p}\]

The explicit matrix of $F$ depends on the order of the base of $V$ over $K$, for the ordering $\{b_{1}v_{1},\dots,b_{m}v_{1},\dots,b_{1}v_{n},b_{m}v_{n}\}$, we get the block matrix
\[\left(\begin{array}{cccc}
	\operatorname{Res}_{L/K}(a_{1,1})&\operatorname{Res}_{L/K}(a_{1,2})&\cdots& \operatorname{Res}_{L/K}(a_{1,n})\\
	\operatorname{Res}_{L/K}(a_{2,1})&\operatorname{Res}_{L/K}(a_{2,2})&\cdots& \operatorname{Res}_{L/K}(a_{2,n})\\
	\vdots&\vdots&\ddots&\vdots\\
	\operatorname{Res}_{L/K}(a_{n,1})&\operatorname{Res}_{L/K}(a_{n,2})&\cdots& \operatorname{Res}_{L/K}(a_{n,n})
\end{array}\right)\]
\begin{proposition}
	For a matrix $A(a_{ij})_{i,j=1,\dots,n}\in\operatorname{GL}_{n}(L)$ we have (in a appropriately ordered base)
	\[\operatorname{Res}_{L/K}(A)=(\operatorname{Res}_{L/K}(a_{i,j}))_{i,j=1,\dots,n}.\]
\end{proposition}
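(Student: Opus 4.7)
The plan is to read the matrix of $F$ in the $K$-basis of $V$ directly off the formula $F(b_{i}v_{j}) = \sum_{p=1}^{n}\sum_{q=1}^{m} \operatorname{Res}_{L/K}(a_{p,j})_{q,i}\,b_{q}v_{p}$ derived immediately before the proposition, and to observe that, for the specified ordering of the basis, the $mn\times mn$ matrix of $F$ organises itself into an $n\times n$ array of $m\times m$ blocks whose $(p,j)$-block coincides with $\operatorname{Res}_{L/K}(a_{p,j})$. Since $\operatorname{Res}_{L/K}(A)$ is by definition the matrix of $F$ in this basis, the proposition will follow at once.

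More concretely, the plan is to fix the ordering $(b_{1}v_{1},\dots,b_{m}v_{1},b_{1}v_{2},\dots,b_{m}v_{n})$ of the $K$-basis of $V$, so that the basis vector $b_{i}v_{j}$ occupies position $(j-1)m+i$. The $(q,i)$-entry of the block at block-position $(p,j)$ of the matrix of $F$ is then, by definition, the coefficient of $b_{q}v_{p}$ in the expansion of $F(b_{i}v_{j})$. Reading this coefficient off the displayed formula gives exactly $\operatorname{Res}_{L/K}(a_{p,j})_{q,i}$, so the $(p,j)$-block is the matrix whose $(q,i)$-entry is $\operatorname{Res}_{L/K}(a_{p,j})_{q,i}$, i.e.\ the matrix $\operatorname{Res}_{L/K}(a_{p,j})$ itself. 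This is precisely the block pattern displayed above the proposition.

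The only real obstacle is bookkeeping, namely checking that the index conventions are compatible: within each block, row indices $q$ and column indices $i$ correspond to $b_{q}$ and $b_{i}$ respectively (matching the convention used to define $\operatorname{Res}_{L/K}(a)$ via $a\cdot b_{i}=\sum_{q}\operatorname{Res}_{L/K}(a)_{q,i}b_{q}$), while block-row $p$ and block-column $j$ correspond to $v_{p}$ and $v_{j}$. Because the chosen ordering groups together all $b_{i}v_{j}$ with the same $j$ consecutively, this is automatic, and no further computation is needed.
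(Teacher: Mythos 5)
Your proposal is correct and follows exactly the route the paper takes: the paper derives the expansion $F(b_{i}v_{j}) = \sum_{p=1}^{n}\sum_{q=1}^{m} \operatorname{Res}_{L/K}(a_{p,j})_{q,i}\,b_{q}v_{p}$ immediately before the proposition and reads the block matrix off from it in the same ordered basis. Your additional care with the index bookkeeping (matching $q,i$ within blocks to the convention $a\cdot b_{i}=\sum_{q}\operatorname{Res}_{L/K}(a)_{q,i}b_{q}$ and $p,j$ to the block positions) is exactly the implicit content of the paper's displayed block matrix.
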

Let $d=[K[a]:K]=\deg_{K}a$ be the degree of $a$ over $K$, denote $r:=\frac md=[L:K[a]]$ and choose an ordered base $\{f_{1},\dots,f_{r}\}$  of $L$ over $K[a]$. Then
\[a(a^{k}f_{l})=a^{k+1}f_{l}, \text{ for }k=0,\dots,d-2,\]
and
\[a(a^{d-1}f_{l})= -(c_{d-1}a^{d-1}f_{l}+\dots+c_{0}f_{l}),\]
where $P(X) = X^{d}+c_{d-1}X^{d-1}+\dots+c_{0}$ is the minimal polynomial of $a$ over $K$. Denoting by $A_{P}$ the companion matrix of $P$ we get that in the base $\{f_{1},a f_{1},\dots,a^{d-1}f_{1},\dots,f_{r},\dots,a^{d-1}f_{r}\}$
\[\operatorname{Res}_{L/K}(a)=\operatorname{diag}(A_{P},\dots,A_{P})\]
If the extension $L/K$ is Galois with the Galois group $\operatorname{Gal}(L/K)=\{\sigma_{1},\dots,\sigma_{m}\}$ the last matrix is similar to
\[\operatorname{diag}(\sigma_{1}(a),\dots,\sigma_{m}(a)).\]

For a matrix $A$ with entries in the field $L$ and any automorphism $\sigma\in\operatorname{Aul}(L)$ we denote by $A^{\sigma}$ the matrix obtained from $A$ by application of $\sigma$ to each entry.

\begin{proposition}
	Let $A\in GL_{n}(L)$ be an invertible matrix over a field $L$, assume that $L$ is a finite Galois extension of $K$ with the Galois group $\{\sigma_{1},\dots,\sigma_{m}\}$. Then the matrices
	\[\operatorname{Res}_{L/K}(A)\ \ \text{ and }\ \ \operatorname{diag}(A^{\sigma_{1}},\dots,A^{\sigma_{m}})\]
	are similar.
\end{proposition}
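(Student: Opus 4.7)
The plan is to interpret the Weil restriction intrinsically, then base-change from $K$ back to $L$, so as to apply the standard decomposition $L\otimes_{K}L\cong L^{m}$ that is available because $L/K$ is Galois. Once this is in place the block-diagonal form will fall out almost automatically.

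First, I would recall that $\operatorname{Res}_{L/K}(A)$ is by construction the matrix of the $K$-linear endomorphism $\phi_{A}\colon L^{n}\longrightarrow L^{n}$, $v\mapsto Av$, in some chosen $K$-basis of $L^{n}$. The map $\phi_{A}$ is intrinsic in $\operatorname{End}_{K}(L^{n})$, so any other $K$-basis would produce a conjugate matrix. After extending scalars to $L$, the induced map $\phi_{A}\otimes_{K}1_{L}$ on $L^{n}\otimes_{K}L$ is represented, in the correspondingly extended basis, by the same matrix $\operatorname{Res}_{L/K}(A)$, now regarded as an element of $\operatorname{GL}_{mn}(L)$.

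Next, I would invoke the canonical isomorphism of $L$-algebras
\[L\otimes_{K}L\xrightarrow{\sim}\prod_{i=1}^{m}L,\qquad a\otimes b\longmapsto\bigl(\sigma_{i}(a)\,b\bigr)_{i=1,\dots,m},\]
where the $L$-structure on the left comes from the second tensor factor. This can be obtained through a primitive element $\alpha$ and the Chinese Remainder decomposition $L[X]/P(X)\cong\prod_{i}L[X]/(X-\sigma_{i}(\alpha))$, where $P$ is the minimal polynomial of $\alpha$ over $K$. Combining with $L^{n}\otimes_{K}L\cong(L\otimes_{K}L)^{n}\cong(L^{n})^{m}$ I get an explicit $L$-linear identification $v\otimes b\mapsto\bigl(b\cdot\sigma_{i}(v)\bigr)_{i=1,\dots,m}$, where $\sigma_{i}(v)$ denotes coordinatewise application of $\sigma_{i}$ to the entries of $v$.

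Finally I would track $\phi_{A}\otimes_{K}1_{L}$ through this identification. The image of $v\otimes b$ is $Av\otimes b$, whose $i$-th component becomes $b\cdot\sigma_{i}(Av)=A^{\sigma_{i}}\bigl(b\cdot\sigma_{i}(v)\bigr)$, since $\sigma_{i}$ is an automorphism commuting with addition and intertwines the entries of $A$ with those of $A^{\sigma_{i}}$. Hence on the $i$-th copy of $L^{n}$ the map acts as left multiplication by $A^{\sigma_{i}}$, so in the new $L$-basis the same endomorphism $\phi_{A}\otimes_{K}1_{L}$ is represented by $\operatorname{diag}(A^{\sigma_{1}},\dots,A^{\sigma_{m}})$. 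Two matrices representing the same $L$-endomorphism in different $L$-bases are similar, which gives the claim. The main bookkeeping obstacle is to see transparently where each $\sigma_{i}$ gets applied under the chain of isomorphisms; once the identification $v\otimes b\mapsto(b\cdot\sigma_{i}(v))_{i}$ is written down, the Galois twist $A^{\sigma_{i}}$ appears automatically and the proposition follows without further computation.
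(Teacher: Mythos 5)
Your proof is correct, but it takes a genuinely different route from the paper. The paper states this proposition without a formal proof: it first computes $\operatorname{Res}_{L/K}(a)$ for a scalar $a\in L$ as a block-diagonal matrix of companion matrices of the minimal polynomial (hence similar to $\operatorname{diag}(\sigma_{1}(a),\dots,\sigma_{m}(a))$), and gives the block description $\operatorname{Res}_{L/K}(A)=(\operatorname{Res}_{L/K}(a_{ij}))_{i,j}$, leaving the general case implicit. Note that the companion-matrix computation alone does not immediately yield the $n>1$ case, because the conjugating matrix it produces depends on the entry $a$; to make that route work one must observe that the single matrix $V=(\sigma_{i}(b_{j}))_{i,j}$ (invertible by Dedekind's independence of characters) satisfies $V\operatorname{Res}_{L/K}(a)V^{-1}=\operatorname{diag}(\sigma_{1}(a),\dots,\sigma_{m}(a))$ simultaneously for every $a\in L$, after which conjugating the block matrix by $\operatorname{diag}(V,\dots,V)$ and a permutation finishes the argument. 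Your approach instead base-changes to $L$ and uses the $L$-algebra isomorphism $L\otimes_{K}L\cong\prod_{i}L$, $a\otimes b\mapsto(\sigma_{i}(a)b)_{i}$, to exhibit both matrices as representing the endomorphism $\phi_{A}\otimes 1_{L}$ in two $L$-bases; this is a clean, coordinate-free argument that handles all $n$ at once and makes the appearance of the Galois twists $A^{\sigma_{i}}$ transparent, at the cost of invoking the splitting of $L\otimes_{K}L$ rather than an elementary matrix identity. (Your similarity is a priori over $L$, which is the correct reading since $\operatorname{diag}(A^{\sigma_{1}},\dots,A^{\sigma_{m}})$ has entries in $L$.)
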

\begin{corollary}
	\[\operatorname{trace}(\operatorname{Res}_{L/K}(A)) = \operatorname{trace}_{L/K}(\operatorname{trace}(A)) \]
\end{corollary}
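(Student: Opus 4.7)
The plan is to chain together the previous proposition with two elementary facts: that trace is a similarity invariant, and that trace commutes with the entrywise action of a field automorphism. There is essentially no obstacle here; the corollary is really just an unpacking of the preceding proposition.

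First I would invoke the previous proposition, which gives that $\operatorname{Res}_{L/K}(A)$ and $\operatorname{diag}(A^{\sigma_{1}},\dots,A^{\sigma_{m}})$ are similar matrices. Since the trace is invariant under similarity, this yields
\[\operatorname{trace}(\operatorname{Res}_{L/K}(A)) = \operatorname{trace}(\operatorname{diag}(A^{\sigma_{1}},\dots,A^{\sigma_{m}})) = \sum_{i=1}^{m}\operatorname{trace}(A^{\sigma_{i}}).\]

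Next I would observe that, because $\sigma_{i}$ is a field automorphism of $L$ and trace is a sum of diagonal entries, $\operatorname{trace}(A^{\sigma_{i}}) = \sigma_{i}(\operatorname{trace}(A))$ for each $i$. Substituting this in gives
\[\operatorname{trace}(\operatorname{Res}_{L/K}(A)) = \sum_{i=1}^{m}\sigma_{i}(\operatorname{trace}(A)).\]

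Finally, I would appeal to the definition of the field trace: for any $x\in L$ with $L/K$ Galois with Galois group $\{\sigma_{1},\dots,\sigma_{m}\}$, we have $\operatorname{trace}_{L/K}(x) = \sum_{i=1}^{m}\sigma_{i}(x)$. Applied with $x=\operatorname{trace}(A)\in L$, this identifies the sum above with $\operatorname{trace}_{L/K}(\operatorname{trace}(A))$, giving the desired equality. The only step with any content is the previous proposition, which has already been established; the rest is bookkeeping.
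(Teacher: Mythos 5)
Your proof is correct and is exactly the argument the paper intends (the paper leaves the corollary as an immediate consequence of the preceding similarity proposition, which is precisely what you spell out). Note only that your argument uses the Galois hypothesis of that proposition, both for the similarity statement and for the formula $\operatorname{trace}_{L/K}(x)=\sum_{i}\sigma_{i}(x)$, which matches the setting of the paper.
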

Clearly, for any automorphism $\sigma\in\operatorname{Gal}_K(L)$  the map $A\longmapsto A^{\sigma}$ defines an automorphism of $\operatorname{GL}_{n}(L)$, hence for any representation $\rho$ realizable over $L$ we can define a representation $\rho^{\sigma}$.
\begin{corollary}
	For any representation $\rho$ realizable over a field $L$ which is a finite Galois extension of a field $K$ we have
	\[\chi_{\operatorname{Res}_{L/K}\rho} = \operatorname{trace}_{L/K}(\chi_{\rho})\]
\end{corollary}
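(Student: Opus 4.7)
The plan is to evaluate both sides at an arbitrary $g\in G$ and reduce the identity to the preceding corollary on traces of restricted matrices. Fix compatible bases: an ordered $L$-basis $\{v_{1},\dots,v_{n}\}$ of $W$ and an ordered $K$-basis $\{b_{1},\dots,b_{m}\}$ of $L$, so that $(b_{i}v_{j})$ is the ordered $K$-basis of $W_{K}$ appearing in the discussion of the matrix form of the Weil restriction. With these choices, the construction of $\operatorname{Res}_{L/K}\rho$ says precisely that the matrix of $(\operatorname{Res}_{L/K}\rho)(g)$ in the $K$-basis $(b_{i}v_{j})$ is the matrix $\operatorname{Res}_{L/K}(\rho(g))$ produced from the $L$-matrix of $\rho(g)$ in the basis $(v_{j})$; this is immediate from the fact that the inclusion $\operatorname{GL}(W)\hookrightarrow \operatorname{GL}_{K}(W)=\operatorname{GL}(W_{K})$ is a group homomorphism, so the assignment $g\mapsto \operatorname{Res}_{L/K}(\rho(g))$ is itself a representation, namely $\operatorname{Res}_{L/K}\rho$.

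Once this matrix-level identification is in place the rest is mechanical. For every $g\in G$,
\[\chi_{\operatorname{Res}_{L/K}\rho}(g)=\operatorname{trace}\bigl((\operatorname{Res}_{L/K}\rho)(g)\bigr)=\operatorname{trace}\bigl(\operatorname{Res}_{L/K}(\rho(g))\bigr).\]
Now apply the preceding corollary with $A=\rho(g)\in\operatorname{GL}_{n}(L)$; its hypothesis is satisfied because $L/K$ is Galois. One obtains
\[\operatorname{trace}\bigl(\operatorname{Res}_{L/K}(\rho(g))\bigr)=\operatorname{trace}_{L/K}(\operatorname{trace}(\rho(g)))=\operatorname{trace}_{L/K}(\chi_{\rho}(g)),\]
which is the value of $\operatorname{trace}_{L/K}(\chi_{\rho})$ at $g$. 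Since $g$ was arbitrary, the two class functions on $G$ coincide, proving the corollary.

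There is no real obstacle here: the statement is essentially a pointwise repackaging of the previous corollary, lifted from a single matrix to the family of matrices $\{\rho(g)\}_{g\in G}$. The only point that deserves explicit mention is the compatibility of the basis choices, namely that the matrix-level Weil restriction $A\mapsto\operatorname{Res}_{L/K}(A)$ assembled in a fixed $K$-basis of $L^{n}$ agrees with the representation-theoretic Weil restriction $\operatorname{Res}_{L/K}\rho$ in the corresponding basis of $W_{K}$; this is built into the definitions given just above the proposition and requires no further argument.
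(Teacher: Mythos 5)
Your proof is correct and matches the paper's intent exactly: the paper states this corollary without proof as an immediate pointwise consequence of the preceding corollary $\operatorname{trace}(\operatorname{Res}_{L/K}(A)) = \operatorname{trace}_{L/K}(\operatorname{trace}(A))$, applied to $A=\rho(g)$ for each $g\in G$, which is precisely what you do. The remark about basis compatibility is a reasonable extra care, though strictly speaking the trace is basis-independent so nothing hinges on it.
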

\begin{theorem}\label{the6}
    Every representation $\mu$ realizable over $K$ containing irreducible representation $\rho$ has to contain representation isomorphic to  $Res_{L/K}(\rho)$ as well, where $L$ is some, minimal in sense of inclusion, field over which representation $\rho$ is realizable.
\end{theorem}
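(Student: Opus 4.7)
The plan is to analyze the character $\chi_\mu$ using Galois-invariance coming from $K$-realizability, and then match it against the character formula for $\operatorname{Res}_{L/K}\rho$ established in the preceding corollary.

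First, since every matrix $\mu_g$ has entries in $K$, its trace $\chi_\mu(g)$ lies in $K$; hence $\chi_\mu$ is invariant under $\operatorname{Gal}(\tilde L/K)$, where $\tilde L$ denotes the Galois closure of $L$ over $K$. Writing the canonical $\CC$-decomposition $\mu\cong\bigoplus_{i}n_{i}\rho_{i}$ with distinct irreducibles $\rho_{i}$, the equation $\chi_\mu=\sum_{i}n_{i}\chi_{\rho_{i}}$ combined with linear independence of distinct irreducible characters forces the multiplicities to be constant along Galois orbits: for every $\sigma\in\operatorname{Gal}(\tilde L/K)$ and every irreducible $\rho'$ appearing in $\mu$, one has $n_{\rho'^{\sigma}}=n_{\rho'}$. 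In particular, if $\rho$ appears in $\mu$ with multiplicity $n\ge 1$, so does every Galois conjugate $\rho^\sigma$.

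Second, by the corollary on Weil restriction of characters, the character of $\operatorname{Res}_{L/K}\rho$ equals $\sum_\sigma \chi_\rho^\sigma$, where $\sigma$ ranges over the embeddings $L\hookrightarrow\CC$ fixing $K$. Grouping embeddings by the character they induce, this sum decomposes into the distinct Galois conjugates of $\chi_\rho$, each occurring with the same multiplicity $[L:K(\chi_\rho)]$; the minimality of $L$ as a field of realizability ensures that this index is precisely the Schur index of $\rho$ over $K$.

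The main obstacle is matching multiplicities: the Galois-invariance argument only guarantees that each conjugate of $\rho$ appears in $\mu$ with multiplicity $n\ge 1$, whereas $\operatorname{Res}_{L/K}\rho$ requires multiplicity at least $[L:K(\chi_\rho)]$ for each conjugate. Bridging this gap requires the Schur-index divisibility: the multiplicity of any $\CC$-irreducible in a $K$-realizable representation is always a multiple of its Schur index over $K$. Once this is in hand (its full proof is developed in Section 4 on Schur indices), we conclude $n\ge[L:K(\chi_\rho)]$, so $\mu$ accommodates $\operatorname{Res}_{L/K}\rho\cong\bigoplus_\sigma\rho^\sigma$ as a subrepresentation, completing the proof.
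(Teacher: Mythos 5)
The paper states Theorem~\ref{the6} without proof, so there is no argument of the author's to set yours against; I can only judge the proposal on its own terms and against how the theorem is used later. Your strategy is certainly the intended one: the opening of Section~\ref{sec:schurind} restates exactly your two ingredients (character values of $\mu$ lie in $K$, hence multiplicities are constant along Galois orbits of irreducible constituents; multiplicities are divisible by the Schur index) as the content of this theorem. Your first step is correct and standard, with the small caveat that $\tilde L$ should be enlarged to a Galois extension of $K$ containing all character values of $G$ (e.g.\ the compositum of $L$ with the cyclotomic field supplied by Brauer's theorem) so that $\chi\mapsto\chi^\sigma$ genuinely permutes the irreducible characters; and your computation $\chi_{\operatorname{Res}_{L/K}\rho}=\sum_\sigma\chi_\rho^\sigma$, with each distinct conjugate occurring $[L:K(\chi_\rho)]$ times, is right, though the paper's corollary is stated only for Galois $L/K$ and you are implicitly using $\operatorname{trace}_{L/K}=\sum_\sigma\sigma$ for the separable extension $L/K$.

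Two points need repair. First, the step ``minimality of $L$ ensures that $[L:K(\chi_\rho)]$ is precisely the Schur index'' is not valid for inclusion-minimal fields: an inclusion-minimal splitting field of the division algebra attached to $\rho$ can have degree a proper multiple of the Schur index (for a quaternion algebra, a quartic field with no quadratic subfield can be a minimal splitting field), and for such an $L$ the inequality $n\ge[L:K(\chi_\rho)]$ does not follow from $m_K(\chi_\rho)\mid n$ --- indeed the containment of $\operatorname{Res}_{L/K}\rho$ in $\mu$ can then fail outright. The theorem's ``some minimal field'' must therefore be read as a field attaining the minimum degree $m_K(\chi_\rho)$ over $K(\chi_\rho)$; such a field exists by the definition of the Schur index, is automatically inclusion-minimal, and with that choice your argument closes. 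Second, your bridge is precisely the ``only if'' half of the Schur corollary quoted in Section~\ref{sec:schurind} from \cite[p.~479]{CR}; this is not circular, but it means the theorem cannot be established from the material preceding it in the paper, so you should cite that result explicitly rather than defer to material ``developed in Section 4''.
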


\section{Schur index}
\label{sec:schurind}

In the previous section we described two necessary conditions which a representation $\rho$ must satisfy to be defined over a number field $K$
\begin{itemize}
	\item values of the character of $\rho$ must belong to $K$,
	\item conjugated by an automorphism of $K$ irreducible representations must belong to $\rho$ with the same multiplicity.
\end{itemize}
As we noticed in Ex. \ref{ex18} these conditions are not sufficient, there is a degree 2 representation of quaternion group $Q_{8}$ which is defined over $\QQ[i]$ but is not realizable over $\QQ$. As this is the only representation of $Q_{8}$ of degree 2, it is isomorphic to its complex conjugate. Consequently a direct sum of two copies of $\rho$ is a representation of degree 4 realizable over $\QQ$.

Let $K\subset \CC$ be a subfield of the field of complex numbers, $\chi$ a character of a finite group $G$. Denote by $K(\chi)$ extension of the field $K$ by values of the character $\chi$.

\begin{definition}\cite[Def.~41.4]{CR}
	The \emph{Schur index} $m_{K}(\chi)$ of the character $\chi$ is the minimum
	of extensions degrees $[L:K(\chi)]$ taken over all field $L$ such that
	$K\subset L\subset \CC$ and $\chi$ is a character of a representation  of $G$ realizable over $L$.
\end{definition}
\begin{remark}
	We have $m_{K}(\chi)=1$ iff the character $\chi$ affords a representation realizable over $K(\chi)$.
\end{remark}
Schur index has alternative description based on the Wedderburn Theorem (\cite[Thm. 26.4]{CR}, \cite[sect. 12.2]{Serre}).

Let $L$ be a field extension such that $[L:K(\chi)]=m_{K}(\chi)$ and let $\rho$ be a representation of $G$ realizable over the field $L$. Then the Weil restriction $\operatorname{Res}_{L/K(\chi)}(\rho)$ is a representation of $G$ realizable over $K(\chi_{i})$.

\begin{corollary}[Schur, \mbox{\cite[p. 479]{CR}}]
	Let $L$ be be a finite Galois extension of $\QQ$, which is a splitting field of a finite group $G$ and let $\chi_{1},\dots,\chi_{c}$ be all irreducible characters of $G$.
	Then a character $\chi=n_{1}\chi_{1}+\dots+n_{c}\chi_{c}$  affords a representation of $G$ realizable over a subfield $K\subset L$ iff the following two conditions hold
	\begin{enumerate}
		\item $m_{K}(\chi_{i})\mid n_{i}$,
		\item if $\chi_{i}=\chi_{j}^{\sigma}$ is a conjugate of $\chi_{j}$, $\sigma\in\operatorname{Gal}_K(L)$, then $n_{i}=n_{j}$.
	\end{enumerate}
\end{corollary}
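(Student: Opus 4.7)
The plan is to prove both implications by relating $\rho$ to Weil restrictions of its irreducible components, using Theorem \ref{the6} together with the character formula for Weil restriction.

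For sufficiency, suppose conditions (1) and (2) hold. For each orbit of $\operatorname{Gal}_K(L)$ on the set of irreducible characters, I pick a representative $\chi_i$ and a minimal field $L_i\supset K(\chi_i)$ over which $\rho_i$ is realizable, so that $[L_i:K(\chi_i)]=m_K(\chi_i)$. Computing the character of $\operatorname{Res}_{L_i/K}(\rho_i)$ via the tower $K\subset K(\chi_i)\subset L_i$, and using that $\chi_i$ is fixed by $\operatorname{Gal}(L_i/K(\chi_i))$, yields
\[
\chi_{\operatorname{Res}_{L_i/K}(\rho_i)}\;=\;m_K(\chi_i)\sum_{\tau}\chi_i^{\tau},
\]
where $\tau$ runs over $K$-embeddings of $K(\chi_i)$, equivalently over the Galois orbit of $\chi_i$. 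By (1) and (2) the integer $k_i:=n_i/m_K(\chi_i)$ is well defined and constant along each orbit, so the direct sum $\bigoplus_i k_i\cdot\operatorname{Res}_{L_i/K}(\rho_i)$ (one summand per orbit) has character $\chi$ and is realizable over $K$.

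For necessity, assume $\rho$ is realizable over $K$. Then $\chi$ takes values in $K$ and is fixed by every $\sigma\in\operatorname{Gal}_K(L)$. Applying $\sigma$ to $\chi=\sum_i n_i\chi_i$ and using uniqueness of the decomposition into irreducibles (together with the standard fact that $\sigma$ permutes the irreducible characters) gives $n_i=n_j$ whenever $\chi_i=\chi_j^{\sigma}$, which is (2). For (1), Theorem \ref{the6} forces $\rho$ to contain $\operatorname{Res}_{L_i/K}(\rho_i)$ for some minimal $L_i$. Since both $\rho$ and this Weil restriction are realizable over $K$ and $\operatorname{char}K=0$, Maschke's theorem applied inside the category of $K[G]$-modules produces a $K$-complement, again realizable over $K$ and of strictly smaller degree. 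Inducting on $\dim\rho$ exhibits $\rho$ as a direct sum of Weil restrictions of irreducibles, and the character formula above then expresses each $n_i$ as a sum of copies of $m_K(\chi_i)$, giving $m_K(\chi_i)\mid n_i$.

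The main technical obstacle is the character formula for $\operatorname{Res}_{L_i/K}(\rho_i)$ when $L_i/K$ is not Galois: it requires factoring the trace through $K(\chi_i)$ by the tower formula in separable extensions, using that $\chi_i$ is invariant under $\operatorname{Gal}(L_i/K(\chi_i))$ by the definition of $K(\chi_i)$. A secondary point is ensuring that the copy of $\operatorname{Res}_{L_i/K}(\rho_i)$ produced by Theorem \ref{the6} actually sits inside $\rho$ as a $K[G]$-submodule, not merely as a $\CC[G]$-submodule after base change, so that Maschke's argument and the ensuing induction stay within representations realizable over $K$.
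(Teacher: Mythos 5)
The paper does not actually prove this corollary --- it cites Curtis--Reiner and only sketches, in the paragraph just before the statement, the Weil-restriction construction underlying the ``if'' direction. Measured against that, your sufficiency argument is correct and is the intended one: for $L_i\supset K(\chi_i)$ with $[L_i:K(\chi_i)]=m_K(\chi_i)$ realizing $\rho_i$, the character of $\operatorname{Res}_{L_i/K}(\rho_i)$ is $\operatorname{trace}_{L_i/K}(\chi_i)=m_K(\chi_i)\sum_\tau\chi_i^\tau$ (your worry about the non-Galois case is easily dispatched: the trace of multiplication by $a$ on $L$ viewed as a $K$-space equals $\operatorname{Tr}_{L/K}(a)$ for any finite extension, so the paper's trace formula for $\operatorname{Res}_{L/K}$ needs no Galois hypothesis), and conditions (1) and (2) let you assemble $\chi$ as an integer combination of these $K$-realizable blocks. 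The derivation of condition (2) in the ``only if'' direction from Galois-invariance of $\chi$ and linear independence of the irreducible characters is likewise fine.

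The genuine gap is in the necessity of condition (1), and it is exactly the point you flag but do not close. Theorem \ref{the6} is itself stated in the paper without proof, speaks of a field ``minimal in the sense of inclusion'' (not obviously the same as one of degree $m_K(\chi_i)$ over $K(\chi_i)$, which is what your character computation requires), and does not assert that the copy of $\operatorname{Res}_{L_i/K}(\rho_i)$ is a $K[G]$-submodule of a $K$-form $W$ of $\rho$. Under the weak reading it only tells you that $\chi-m_K(\chi_i)\sum_\tau\chi_i^\tau$ is again a character; then Maschke over $K$ has nothing to complement, the complementary character need not be afforded by a $K$-realizable representation, the induction cannot restart, and you obtain only $n_i\ge m_K(\chi_i)$ rather than $m_K(\chi_i)\mid n_i$. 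The standard way to close this is to decompose $W$ into simple $K[G]$-modules and prove that each simple module in the block of $\chi_i$ complexifies to the character $m_K(\chi_i)\sum_\tau\chi_i^\tau$ exactly --- equivalently, that $\operatorname{Res}_{L_i/K}(\rho_i)$ is a \emph{simple} $K[G]$-module because $L_i$ is a maximal subfield of the division algebra $\operatorname{End}_{K[G]}(S)$. That statement is essentially the content of the Curtis--Reiner result being quoted, so as written your necessity argument either imports the whole difficulty through an optimistic reading of Theorem \ref{the6} or leaves its key divisibility step unjustified.
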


\section{Schur bound}
\label{sec:schurbound}

In this section we shall recall the Schur bound of the order of a finite group of matrices over a given number field, we shall follow \cite{Guralnick, Serre2}.

For an integer $m>1$ denote by $\zeta_m = e^{\frac{2 \pi i}{m}}$ a (fixed) degree $m$ primitive root of unity.

\begin{definition}
For a prime number $l$ define
\[
m(K, l) = \min\{ m \ge 1 | K \cap \mathds{Q}(\zeta_{l^m}) =  K \cap \mathds{Q}(\zeta_{l^{m+1}}) = \dots\}
\]

\end{definition}
The chain of inclusions
\[\mathds{K} \cap \mathds{Q}(\zeta_{l}) \subset \mathds{K} \cap \mathds{Q}(\zeta_{l^2}) \subset \mathds{K} \cap \mathds{Q}(\zeta_{l^3}) \subset \dots\]
stabilizes since $[K:\mathds{Q}]$ is finite, so $m(K,l)$
is well-defined.

Since
\[[K\cdot\QQ(\zeta_{l^m}):\QQ] = \dfrac{[K:\QQ] [\QQ(\zeta_{l^m}):\QQ]} {[K\cap\QQ(\zeta_{l^m}):\QQ]}\]
and
\[[K\cdot\QQ(\zeta_{l^{m+1}}):\QQ] = \dfrac{[K:\QQ] [\QQ(\zeta_{l^{m+1}}):\QQ]} {[K\cap\QQ(\zeta_{l^{m+1}}):\QQ]}\]
using the tower law we get for $m\ge1$
\[[K\cap\QQ(\zeta_{l^{m+1}}):K\cap\QQ(\zeta_{l^m})] [K\cdot\QQ(\zeta_{l^{m+1}}):K\cdot\QQ(\zeta_{l^m})] = l.\]

\begin{lemma}
	If $l$ is an odd prime then the sequence $\mathds{K} \cap \mathds{Q}(\zeta_{l^m})$ stabilizes at the first equality.
\end{lemma}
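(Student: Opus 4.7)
The plan is to show that the first equality in the sequence forces all subsequent equalities. Concretely, I fix $m \geq 1$, assume $K \cap \QQ(\zeta_{l^{m+1}}) = K \cap \QQ(\zeta_{l^{m}})$, and argue that $K \cap \QQ(\zeta_{l^{m+2}}) = K \cap \QQ(\zeta_{l^{m+1}})$; iterating this one step then yields the lemma.

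Set $F := \QQ(\zeta_{l^m})$, $E := \QQ(\zeta_{l^{m+2}})$, and $L_k := K \cdot \QQ(\zeta_{l^k})$. The tower identity displayed just before the lemma reads
\[ [K \cap \QQ(\zeta_{l^{m+1}}) : K \cap \QQ(\zeta_{l^m})] \cdot [L_{m+1}:L_m] = l, \]
so, both factors being positive integers and $l$ being prime, my hypothesis is equivalent to $[L_{m+1}:L_m] = l$ and the goal becomes $[L_{m+2}:L_{m+1}] = l$. I would argue by contradiction, supposing $L_{m+2} = L_{m+1}$, that is, $E \subseteq L_{m+1}$.

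The only step where oddness of $l$ enters is the following structural observation: because $l$ is odd, $(\ZZ/l^{m+2})^{\times}$ is cyclic, so $\operatorname{Gal}(E/F)$ is cyclic of order $l^{2}$, and hence the only intermediate fields of $E/F$ are $F$, $\QQ(\zeta_{l^{m+1}})$, and $E$. Apply this trichotomy to $E \cap L_m$: if it contained $\QQ(\zeta_{l^{m+1}})$, then $\QQ(\zeta_{l^{m+1}}) \subseteq L_m$, forcing $L_{m+1} = L_m$ and contradicting $[L_{m+1}:L_m] = l$. Hence $E \cap L_m = F$, which gives $[E \cdot L_m : L_m] = [E:F] = l^{2}$. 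But $E \cdot L_m \subseteq L_{m+1}$ together with $[L_{m+1}:L_m] = l$ would force $l^{2} \leq l$, a contradiction.

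The main obstacle is to identify cyclicity of $\operatorname{Gal}(E/F)$ as the decisive structural ingredient and to see that this is exactly where $l$ odd is used: for $l=2$ one has $(\ZZ/2^n)^{\times} \cong \ZZ/2 \times \ZZ/2^{n-2}$ for $n \geq 3$, so $F$ can have several minimal proper extensions inside $E$, the trichotomy of intermediate fields above collapses, and indeed the analogous statement is false.
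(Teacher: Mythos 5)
Your proof is correct, but it takes a genuinely different route from the paper's. The paper argues directly on the intersection fields: assuming $K\cap\QQ(\zeta_{l^m})=K\cap\QQ(\zeta_{l^{m+1}})\subsetneq K\cap\QQ(\zeta_{l^{m+2}})$, it observes that $K\cap\QQ(\zeta_{l^{m+2}})$ is a subfield of $\QQ(\zeta_{l^{m+2}})$ not contained in $\QQ(\zeta_{l^{m+1}})$, hence has degree over $\QQ$ divisible by $l^{m+1}$; intersecting back down to $\QQ(\zeta_{l^{m+1}})$ still leaves degree divisible by $l^m$, which is incompatible with that field sitting inside $\QQ(\zeta_{l^m})$. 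You instead dualize through the tower identity to the composita $L_k=K\cdot\QQ(\zeta_{l^k})$ and show the degree jump $[L_{m+1}:L_m]=l$ propagates upward, using the trichotomy of intermediate fields of $\QQ(\zeta_{l^{m+2}})/\QQ(\zeta_{l^m})$ together with the Galois compositum formula $[EL:L]=[E:E\cap L]$. Both arguments ultimately rest on the same fact --- cyclicity of $(\ZZ/l^{m+2}\ZZ)^{\times}$ for odd $l$ --- but they exploit it differently: the paper uses it implicitly through the divisibility properties of degrees of subfields of cyclotomic fields (its one-line assertions about divisibility by $l^{m+1}$ and $l^m$ are exactly where cyclicity is hidden), whereas you isolate it explicitly as ``a cyclic group of order $l^2$ has a unique subgroup of order $l$,'' which makes the failure at $l=2$ maximally transparent. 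The trade-off is that your version imports the standard formula for degrees of composita with a Galois extension, while the paper's stays closer to bare degree counting; your closing remark correctly identifies why the statement breaks for $l=2$.
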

\begin{proof}
Assume to the contrary that
\[K\cap \QQ(\zeta_{l^m}) = K\cap \QQ(\zeta_{l^{m+1}}) \subsetneq K\cap \QQ(\zeta_{l^{m+2}}).\]
Then $K\cap \QQ(\zeta_{l^{m+2}})$ is a subfield in $\QQ(\zeta_{l^{m+2}})$ which is not contained in $\QQ(\zeta_{l^{m+1}})$, hence the degree of the extension $[K\cap\QQ(\zeta_{l^{m+2}}):\QQ]$ is divisible by $l^{m+1}$. Now the degree of $[K\cap\QQ(\zeta_{l^{m+2}})\cap\QQ(\zeta_{l^{m+1}}):\QQ] = [K\cap\QQ(\zeta_{l^{m+1}}):\QQ]$ is divisible by $l^{m}$, consequently
$K\cap\QQ(\zeta_{l^{m+1}})$ is not a subfield of $\QQ(\zeta_{l^{m}})$ - contrary to
$K\cap \QQ(\zeta_{l^m}) = K\cap \QQ(\zeta_{l^{m+1}})$.

\end{proof}

Consequently, if $K\cap\QQ(\zeta_{l^{m+1}})\supsetneq K\cap\QQ(\zeta_{l^m})$
we have $K\cdot\QQ(\zeta_{l^{m+1}})=K\cdot\QQ(\zeta_{l^m})= \dots = K\cdot\QQ(\zeta_{l})$ and
$\zeta_{l^{m+1}} \in K(\zeta_{l})$.
So, we get another description of the number $m(K,l)$ for an odd prime $l$
\begin{corollary}\label{cor:4.3}
\[m(K,l)=\max\{d\ge1: \zeta_{l^{d}}\in K(\zeta_{l})\}.\]
\end{corollary}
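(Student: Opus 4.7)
The plan is to exploit the tower-law identity
\[[K\cap\QQ(\zeta_{l^{m+1}}):K\cap\QQ(\zeta_{l^m})]\cdot[K\cdot\QQ(\zeta_{l^{m+1}}):K\cdot\QQ(\zeta_{l^m})] = l\]
together with the preceding lemma, which together guarantee that for odd $l$ the chain $K\cap\QQ(\zeta_{l^m})$ is strictly increasing for $m=1,\dots,m(K,l)$ and constant afterwards. Since $l$ is prime, each factor on the left is either $1$ or $l$, and they multiply to $l$, so exactly one is trivial at each step.

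First I would handle the inequality $m(K,l)\le \max\{d\ge1:\zeta_{l^{d}}\in K(\zeta_{l})\}$. For $1\le m<m(K,l)$ the intersection jumps strictly, so the first factor equals $l$ and hence the second factor equals $1$, i.e.\ $K\cdot\QQ(\zeta_{l^{m+1}})=K\cdot\QQ(\zeta_{l^{m}})$. Telescoping these equalities from $m=1$ up to $m=m(K,l)-1$ yields
\[K\cdot\QQ(\zeta_{l^{m(K,l)}}) = K\cdot\QQ(\zeta_{l}),\]
in other words $\zeta_{l^{m(K,l)}}\in K(\zeta_{l})$, which gives the desired upper bound.

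For the reverse inequality, I would take any $d\ge1$ with $\zeta_{l^{d}}\in K(\zeta_{l})$. Then $K\cdot\QQ(\zeta_{l^{e}})=K\cdot\QQ(\zeta_{l})$ for every $1\le e\le d$, so in the tower-law identity the second factor is $1$ for $m=1,\dots,d-1$, forcing the first factor to be $l$ at each of those steps. Hence the chain $K\cap\QQ(\zeta_{l^{m}})$ is strictly increasing through index $d$, so it cannot have stabilized before index $d$, giving $m(K,l)\ge d$. Taking the maximum over such $d$ completes the proof.

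The only subtle point is making sure the telescoping argument in the first direction is valid, i.e.\ that the strict-inclusion behaviour really persists all the way up to $m(K,l)-1$; this is exactly what the previous lemma guarantees for odd $l$, since otherwise an intermediate equality would force stabilization before $m(K,l)$ and contradict minimality. Apart from that, the argument is a direct bookkeeping exercise with the tower law.
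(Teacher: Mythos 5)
Your proof is correct and follows essentially the same route as the paper: the tower-law identity $[K\cap\QQ(\zeta_{l^{m+1}}):K\cap\QQ(\zeta_{l^m})]\cdot[K\cdot\QQ(\zeta_{l^{m+1}}):K\cdot\QQ(\zeta_{l^m})]=l$ combined with the preceding lemma that for odd $l$ the chain of intersections stabilizes at the first equality. You are in fact somewhat more complete than the paper, which only spells out the forward direction ($\zeta_{l^{m(K,l)}}\in K(\zeta_l)$) and leaves the reverse inequality implicit.
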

The last formula gives an easy a priori bound $m\le v_{l}(\deg(K))$ of $m$ by the $l$-adic valuation of the degree of the field $K$.

The situation is different in the case of $l=2$, the reason is that for $m\ge3$ the cyclotomic field $\QQ(\zeta_{2^{m}})$ of degree $\phi(2^{m})=2^{m-1}$ contains three subfields of degree $2^{m-2}$ corresponding to the three elements of order 2 in the Galois group $\operatorname{Gal}(\QQ(\zeta_{2^{m}})/\QQ) \cong (\ZZ/2^{m}\ZZ)^\ast\cong \ZZ/2\ZZ\oplus \ZZ/{2^{m-2}}\ZZ$, explicitly they are
\[\QQ(\zeta_{2^{m-1}}), \QQ(\zeta_{2^{m}}+\bar \zeta_{2^{m}}), \QQ(\zeta_{2^{m}}-\bar \zeta_{2^{m}}).\]

If $m(K,2)=1$ then for any $m\ge1$ we have $K\cap\QQ(\zeta_{2^{m}})=\QQ$.
Similarly, if $m(K,2)=2$ then $i\in K$ and for any $m\ge2$ we have $K\cap\QQ(\zeta_{2^{m}})=K[\zeta_{4}]=K[i]$.

On the other hand if $m(K,2)\ge 3$ then $K\cap\QQ(\zeta_{2^{m(K,2)}})\not\subset\QQ(\zeta_{2^{(m(K,2)-1)}})$, hence
$K\cap\QQ(\zeta_{2^{m(K,2)}}) = \QQ(\zeta_{2^{m(K,2)}}))$ or
$K\cap\QQ(\zeta_{2^{m(K,2)}}) = \QQ(\zeta_{2^{m(K,2)}} + \bar \zeta_{2^{m(K,2)}})$ or $K\cap\QQ(\zeta_{2^{m(K,2)}}) = \QQ(\zeta_{2^{m(K,2)}} - \bar \zeta_{2^{m(K,2)}})$.
In the first case we get $K\cap\QQ(\zeta_{2^{m}}) = \QQ(\zeta_{2^{m}}) $ for $1\le m\le m(K,2)$.
In the second case we get $K\cap\QQ(\zeta_{2^{m}}) = \QQ(\zeta_{2^{m}}+\bar \zeta_{{2^{m}}})$ for $1\le m\le m(K,2)$. Finally in the third case we get
$K\cap\QQ(\zeta_{2^{m}}) = \QQ(\zeta_{2^{m}} - \bar \zeta_{2^{m}})$ for $m=m(K,2)$ and $K\cap\QQ(\zeta_{2^{m}}) = \QQ(\zeta_{2^{m}}+\bar \zeta_{2^{m}})$ for $1\le m< m(K,2)$.

We have an alternative description of $m(K,2)$
\begin{proposition}
	Let \[m_0=\max\{d\ge1: \zeta_{2^{d}}\in K(\zeta_{4})=K(i)\}.\]
	Then
	\[m(K,2)=\begin{cases}
		m_0,\quad&\text{when }m_0\ge3\\
		2, &\text{when }m_0=2 \text{ and } i\in K\\
		1, &\text{when }m_0=2 \text{ and } i\not\in K
	\end{cases}\]
\end{proposition}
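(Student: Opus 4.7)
The plan is to analyze the chain $E_m:=K\cap\QQ(\zeta_{2^m})$, whose first stabilization index is $m(K,2)$, via its companion $F_m:=K\cdot\QQ(\zeta_{2^m})$ using the identity
\[[E_{m+1}:E_m]\cdot[F_{m+1}:F_m] \;=\; [\QQ(\zeta_{2^{m+1}}):\QQ(\zeta_{2^m})] \;=\; 2,\]
already derived in this section. The growth of the $E$-chain is what we want to understand, and it is controlled by the growth of the $F$-chain, which is precisely what the hypothesis on $m_0$ dictates.

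The first step is a translation. For $m\ge 2$ one has $F_m = K(\zeta_{2^m})\supseteq K(i)$, so $\zeta_{2^m}\in K(i)$ is equivalent to $F_m = K(i)$; thus the definition of $m_0$ reads $F_2=F_3=\dots=F_{m_0}=K(i)$ and $F_{m_0+1}\supsetneq K(i)$. Moreover $F_1 = K$ coincides with $F_2 = K(i)$ iff $i\in K$. Inserting this into the product identity, the $E$-chain grows strictly at step $m$ iff $F_m = F_{m+1}$. This yields (i) $E_{m_0}=E_{m_0+1}$, (ii) strict growth $E_m\subsetneq E_{m+1}$ at every step $m$ with $2\le m\le m_0-1$, and (iii) strict growth at step $m=1$ iff $i\in K$.

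The core of the proof is to show that the chain remains constant past $m_0$, i.e.\ $E_m = E_{m_0}$ for all $m\ge m_0$. Since $E_m\subseteq K(i)\cap\QQ(\zeta_{2^m})$, it suffices to prove
\[K(i)\cap\QQ(\zeta_{2^m}) \;=\; \QQ(\zeta_{2^{m_0}}),\]
after which $E_m\subseteq K\cap\QQ(\zeta_{2^{m_0}})=E_{m_0}$ and the reverse inclusion is automatic. The left-hand side contains $\QQ(\zeta_{2^{m_0}})$ by definition of $m_0$ and does not contain $\zeta_{2^{m_0+1}}$ by maximality. Writing $(\ZZ/2^m\ZZ)^\ast\cong\langle -1\rangle\times\langle 5\rangle$, the subgroup of residues $\equiv 1\pmod{2^{m_0}}$ excludes $-1$ (since $m_0\ge 2$) and hence lies inside the cyclic factor $\langle 5\rangle$; so $\operatorname{Gal}(\QQ(\zeta_{2^m})/\QQ(\zeta_{2^{m_0}}))$ is cyclic. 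Therefore the subfields of $\QQ(\zeta_{2^m})$ over $\QQ(\zeta_{2^{m_0}})$ form the totally ordered chain $\QQ(\zeta_{2^k})$ for $m_0\le k\le m$, and the only one excluding $\zeta_{2^{m_0+1}}$ is $\QQ(\zeta_{2^{m_0}})$ itself.

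Assembling the cases: when $m_0\ge 3$ the chain grows strictly up to $m_0$ and stabilizes there, so $m(K,2)=m_0$. When $m_0=2$ and $i\in K$, then $E_1=\QQ\subsetneq\QQ(i)=E_2$ and the stabilization applies from $m=2$, giving $m(K,2)=2$. When $m_0=2$ and $i\notin K$, the stabilization step forces $E_m\subseteq K\cap\QQ(i)=\QQ$ for every $m\ge 2$, so the chain is constantly $\QQ$ and $m(K,2)=1$. The main technical point is the cyclicity of $\operatorname{Gal}(\QQ(\zeta_{2^m})/\QQ(\zeta_{2^{m_0}}))$, which boils down to the standard identity $v_2(5^k-1)=v_2(k)+2$ (lifting the exponent); everything else is routine bookkeeping with the product formula.
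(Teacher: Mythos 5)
Your argument is correct and complete. The paper itself states this proposition without a formal proof; the surrounding discussion obtains it by classifying $K\cap\QQ(\zeta_{2^{m}})$ among the three index-two subfields $\QQ(\zeta_{2^{m-1}})$, $\QQ(\zeta_{2^{m}}+\bar\zeta_{2^{m}})$, $\QQ(\zeta_{2^{m}}-\bar\zeta_{2^{m}})$ of $\QQ(\zeta_{2^{m}})$, i.e.\ by working directly with the non-cyclic group $(\ZZ/2^{m}\ZZ)^{\ast}\cong\ZZ/2\ZZ\oplus\ZZ/2^{m-2}\ZZ$. You instead keep the product formula $[E_{m+1}:E_m]\,[F_{m+1}:F_m]=2$ as the engine, exactly as in the paper's treatment of odd $l$, and repair the failure of that argument at $l=2$ by first passing to $K(i)$, where the relevant Galois subgroup becomes cyclic and the intermediate fields form a chain. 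This is a genuinely different and arguably cleaner route: it avoids the case analysis over the three subfields and makes the parallel with Corollary \ref{cor:4.3} transparent, at the cost of the extra lemma that $K(i)\cap\QQ(\zeta_{2^{m}})=\QQ(\zeta_{2^{m_0}})$. The one place to tighten is the sentence claiming that the subgroup of residues $\equiv1\pmod{2^{m_0}}$ ``excludes $-1$ and hence lies inside the cyclic factor $\langle 5\rangle$'': excluding the single element $-1$ from a subgroup of $\langle-1\rangle\times\langle5\rangle$ does not by itself force the subgroup into $\langle5\rangle$ (it could contain some $-5^{k}$). The correct, equally short justification is that every residue $\equiv1\pmod{2^{m_0}}$ with $m_0\ge2$ is in particular $\equiv1\pmod 4$, and the residues $\equiv1\pmod4$ are exactly the powers of $5$; with that fix the cyclicity claim, and hence the whole stabilization step and the final case analysis, is sound.
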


\begin{eg}
	Let $K:=\QQ(\sqrt{2})$, then $\QQ(\zeta_{2})=\QQ$, $\QQ(\zeta_{4})=\QQ(i)$, $\QQ(\zeta_{8})=\QQ(i,\sqrt{2})$ and so
	\[K\cap\QQ(\zeta_{2})=\QQ=K\cap\QQ(\zeta_{4}) \subsetneq K=K\cap\QQ(\zeta_{8}) = \cdots\]
	We conclude that $m(K,2)=3$ but $\zeta_{8}\not\in K(\zeta_{2})$.
\end{eg}

Furthermore we define another invariant $t(K,l)$
\[
    t(K, l) = [\mathds{Q}(\zeta_{l^{m(K,l)}} ) : K \cap \mathds{Q}(\zeta_{l^{m(K,l)}} )] = [K(\zeta_{l^{m(K,l)}} ) : K ]\]
    For an odd prime $l$ we can repeat the above arguments to get
    \[t(K, l) = [K(\zeta_{l}):K].\]
Again, for $l=2$ the formula is slightly more complicated, if $m(K,2)\ge 3$ we get a sequence of strict inclusions
\[K\cap\QQ(\zeta_{4}) \subsetneq K\cap\QQ(\zeta_{8}) \subsetneq\dots \subsetneq K\cap\QQ(\zeta_{2^{m(K,2)}})\]
and hence a sequence of equalities
\[K(\zeta_{4}) = K(\zeta_{8}) =\dots = K(\zeta_{2^{m(K,2)}}) \]
Finally we get in the case  of $l=2$, $m(K,2)\ge 3$
    \[t(K, 2) = [K(\zeta_{4}):K]=[K(i):K].\]
If $m(K,2)=1$ then $K\cap\QQ(\zeta_{2})=\QQ=\QQ(\zeta_{2})$ and if $m(K,2)=2$ then $K\cap\QQ(\zeta_{4})=\QQ(i)=\QQ(\zeta_{4})$, in both cases we get $t(K,2)=1$.
\begin{definition}\label{def:schurbound}
The \emph{Schur number} of a number field $K$ is
\[
S(n,K) = 2^{n - \lfloor \frac{n}{t(K,2)} \rfloor} \prod\limits_l
l^{\left(m(K, l) \lfloor \frac{n}{t(K,l)}\rfloor+ \sum\limits_{i = 1}^{\infty} \lfloor \frac{n}{l^i t(K,l)}\rfloor\right)}
\]
\end{definition}
Denote by $S(n,K,l) = S(n,K)_l$ the $l$-part of $S(n,K)$, for $l$ an odd prime
\[S(n,K,l) =
l^{\left(m(K, l) \lfloor \frac{n}{t(K,l)}\rfloor+ \sum\limits_{i = 1}^{\infty} \lfloor \frac{n}{l^i t(K,l)}\rfloor\right)},\]
while for $l=2$ we have
\[S(n,K,2) =
2^{\left(n - \lfloor \frac{n}{t(K,2)} \rfloor + m(K, 2) \lfloor \frac{n}{t(K,2)}\rfloor+ \sum\limits_{i = 1}^{\infty} \lfloor \frac{n}{2^i t(K,2)}\rfloor\right)}.\]

Schur (\cite{Schur}) proved the following bound of the order of a finite matrix group over a number field.
\begin{theorem}[Schur's bound, \cite{Guralnick}]
	Let $G$ be a finite subgroup of $\operatorname{GL}_{n}(\CC)$ such that traces of all elements of $G$ belongs to a fixed number field $K$. Then
	the order of $G$ divides $S(n,K)$.
\end{theorem}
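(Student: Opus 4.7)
The plan is to prove the bound one prime at a time: for each prime $l$, we show that the $l$-part $|G|_l$ divides $S(n,K,l)$, and multiply over all $l$ to conclude. Replacing $G$ by one of its $l$-Sylow subgroups $P$ (whose matrix traces still lie in $K$, since they are sums of eigenvalues already lying in $\mathcal{O}_K$ at least after base change), the problem reduces to bounding the order of a faithful $l$-subgroup $P \subset \operatorname{GL}_n(\CC)$ whose character takes values in $K$.

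First I would decompose the embedding $\pi\colon P \hookrightarrow \operatorname{GL}_n(\CC)$ as $\pi \cong \bigoplus_i m_i \rho_i$ into pairwise non-isomorphic irreducibles. Because $P$ is an $l$-group, a classical theorem of Blichfeldt says that each $\rho_i$ is monomial, i.e.\ induced from a linear character of a subgroup $H_i \le P$. Hence $\dim \rho_i = l^{a_i}$ for some $a_i \ge 0$, and $\chi_{\rho_i}$ takes values in a cyclotomic field $\QQ(\zeta_{l^{c_i}})$ for some $c_i$. Moreover the hypothesis $\chi_\pi \in K$, combined with the Schur corollary from Section \ref{sec:schurind}, forces the multiplicities $m_i$ to be constant on $\operatorname{Gal}_K$-orbits of the $\rho_i$'s, so these irreducibles appear in complete Galois orbits.

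The proof then combines two ingredients. First, the dimension identity
\[
n \;=\; \sum_i m_i \cdot |\mathrm{orbit}(\rho_i)| \cdot l^{a_i},
\]
with $|\mathrm{orbit}(\rho_i)|=[K(\chi_{\rho_i}):K]$, controls how many irreducibles of each ``level'' $c_i$ may appear; the analysis of $m(K,l)$ and $t(K,l)$ from Section \ref{sec:schurbound} shows that this orbit size is divisible by $t(K,l)\cdot l^{\max(0,\,c_i - m(K,l))}$. Second, because $\rho_i$ is monomial the image of $P$ inside $\operatorname{GL}(V_{\rho_i})$ lies in the monomial group built from $\QQ(\zeta_{l^{c_i}})^{\times}$ and the symmetric group on $l^{a_i}$ letters, yielding an explicit upper bound for $|P|$ in terms of the $c_i$'s and $a_i$'s.

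Feeding the dimension inequality into the order bound and regrouping by the level $c_i$ gives the claim. The main obstacle is the combinatorial bookkeeping: extracting from the per-orbit estimates the precise exponent
\[
m(K,l)\,\lfloor n/t(K,l)\rfloor \;+\; \sum_{i\ge 1}\lfloor n/(l^i\, t(K,l))\rfloor
\]
appearing in Definition \ref{def:schurbound}, together with the exceptional factor $2^{\,n - \lfloor n/t(K,2)\rfloor}$ at $l=2$, which reflects the three distinct index-two subfields of $\QQ(\zeta_{2^m})$ for $m\ge 3$ that were highlighted earlier in the section. For a formal write-up I would cite the detailed computations of \cite{Guralnick} or \cite{Serre2} rather than rederive this bookkeeping from scratch.
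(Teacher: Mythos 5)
The paper does not actually prove this theorem: it is stated with a citation to \cite{Guralnick} and no argument is given, so there is no internal proof to measure your attempt against. Your outline is the standard strategy of Guralnick--Lorenz and \cite{Serre2}: reduce to a Sylow $l$-subgroup $P$ (whose traces lie in $K$ simply because $P\subset G$, not because of any integrality of eigenvalues --- your parenthetical justification is off, though the conclusion is trivially true), invoke Blichfeldt/monomiality for $l$-groups, group the irreducible constituents into Galois orbits, and count dimensions against orbit sizes measured by $t(K,l)$ and $m(K,l)$. One small correction: the constancy of the multiplicities $m_i$ on $\operatorname{Gal}_K$-orbits does not come from the Schur-index corollary of Section~\ref{sec:schurind} (which concerns realizability over $K$, a stronger hypothesis than trace values in $K$); it follows directly from $\chi_\pi^\sigma=\chi_\pi$ together with uniqueness of the decomposition into irreducible characters.

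As a self-contained proof, however, your write-up has a genuine gap, and you acknowledge it yourself: the entire quantitative content of the theorem --- extracting the exponent $m(K,l)\lfloor n/t(K,l)\rfloor+\sum_{i\ge1}\lfloor n/(l^i t(K,l))\rfloor$ of Definition~\ref{def:schurbound}, and in particular the exceptional factor $2^{\,n-\lfloor n/t(K,2)\rfloor}$ coming from the non-cyclicity of $(\ZZ/2^m\ZZ)^\ast$ --- is deferred to the very reference the theorem already cites. The monomial-group estimate also needs more care than you give it: the image of $P$ in each irreducible block is only a quotient of $P$, so one must bound $|P|$ through the faithful direct sum, and the relevant subgroup of the monomial group is $\mu_{l^{c_i}}\wr(\text{an $l$-Sylow of }S_{l^{a_i}})$ rather than the full group built from $\QQ(\zeta_{l^{c_i}})^\times$. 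Since the paper itself treats this result as imported, citing \cite{Guralnick} for the bookkeeping is acceptable here, but then the proposal is a proof sketch with attribution, not a proof.
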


The corresponding bound of the order of a matrix $l$-group is sharp and an analogon of the Sylow theorem holds
\begin{proposition}
	Let $l$ be a prime and let $G$ be a finite $l$-subgroup of $Gl_n(K)$. Then $|G| \le S(n,K,l)$.

	Moreover this bound is sharp and if $H$ is an $l$-subgroup of maximal order then there exist $M \in Gl_n(K)$ such that $G$ is a subgroup of $mHm^{-1}$.
\end{proposition}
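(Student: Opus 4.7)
The plan is to separate the statement into its three ingredients: the upper bound, the attainability of this bound by an explicit construction, and the conjugacy (Sylow-type) statement. The upper bound is essentially a corollary of the preceding Schur bound theorem; the bulk of the work goes into the explicit construction and the conjugacy statement.

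\textbf{Step 1 (upper bound).} Since $G\subseteq\operatorname{GL}_n(K)$, the trace of every element of $G$ lies in $K$, so by the Schur bound theorem $|G|$ divides $S(n,K)$. Because $G$ is an $l$-group, $|G|$ is a power of $l$ and therefore divides the $l$-part $S(n,K,l)$, whence $|G|\le S(n,K,l)$.

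\textbf{Step 2 (sharpness).} Set $m=m(K,l)$, $t=t(K,l)$, and $s=\lfloor n/t\rfloor$. I would assemble an $l$-group of order exactly $S(n,K,l)$ from three standard pieces. First, the one-dimensional representation $\zeta_{l^m}\mapsto\zeta_{l^m}$ of $C_{l^m}$ over $K(\zeta_{l^m})$ gives, via $\operatorname{Res}_{K(\zeta_{l^m})/K}$, an embedding $C_{l^m}\hookrightarrow\operatorname{GL}_t(K)$ of order $l^m$ (this is where $t=[K(\zeta_{l^m}):K]$ enters). Second, taking the block-diagonal sum of $s$ copies realises $(C_{l^m})^s$ in $\operatorname{GL}_{st}(K)\subseteq\operatorname{GL}_n(K)$; the action of a Sylow $l$-subgroup $P$ of $S_s$ by block permutation matrices (which are $K$-rational) then gives the wreath product $C_{l^m}\wr P$ of order $l^{ms}\cdot l^{\sum_i\lfloor s/l^i\rfloor}$ by Legendre's formula, matching $S(n,K,l)$ for odd $l$. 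For $l=2$, one must account for the extra factor $2^{n-s}$: this comes from adjoining, inside each $t$-dimensional block, the Galois involution sending $\zeta_{2^m}$ to $\bar\zeta_{2^m}$, which normalises the cyclic $C_{2^m}$ and is realised over $K$ (its existence reflects the non-cyclic structure of $\operatorname{Gal}(\QQ(\zeta_{2^m})/\QQ)$ that caused the three subcases in the discussion preceding Definition~\ref{def:schurbound}).

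\textbf{Step 3 (conjugacy).} Let $G\subseteq\operatorname{GL}_n(K)$ be an arbitrary $l$-subgroup, and view the inclusion as a faithful $K$-rational representation $\rho$. Extending scalars to $\CC$ and decomposing $\rho$ into irreducibles, I would use the fact that for an $l$-group every complex irreducible representation has dimension a power of $l$ and is monomial (induced from a one-dimensional representation of a subgroup). Applying Theorem~\ref{the6} and the canonical decomposition of Theorem~\ref{the1}, each Galois-orbit of irreducible constituents assembles over $K$ into a Weil restriction $\operatorname{Res}_{L/K}\sigma$ of a one-dimensional character $\sigma$ of a cyclic $l$-subgroup. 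Grouping isomorphic blocks and choosing a basis of $K^n$ adapted to the resulting $K$-rational decomposition gives $M\in\operatorname{GL}_n(K)$ such that $MGM^{-1}$ is contained in a subgroup of the explicit shape produced in Step~2. Taking $H$ of maximal order then yields $G\subseteq MHM^{-1}$ up to a further adjustment of $M$.

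\textbf{Main obstacle.} The upper bound and the construction of a maximal example are essentially book-keeping built on the Weil restriction machinery already developed. The delicate part is the conjugacy claim: one must show that \emph{every} $K$-rational $l$-subgroup fits, after a $K$-rational change of basis, into the explicit wreath-product shape. This requires a structural description of the $K$-rational decomposition of $K^n$ as a $K[G]$-module, controlling the Schur indices of the irreducible constituents and the way Galois orbits interact with the block permutation action. For $l=2$, the three possible shapes of $K\cap\QQ(\zeta_{2^m})$ force one to check each case separately, and this case analysis is where I would expect the bulk of the technical work to lie.
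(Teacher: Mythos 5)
The paper states this proposition without proof (it is imported from the literature on Schur's theorem, cf.\ \cite{Guralnick}), so there is no internal argument to measure yours against; I assess the proposal on its own merits. Your Step 1 is correct and complete. Step 2 is the standard extremal construction and is essentially right: note that $\lfloor n/(l^it)\rfloor=\lfloor s/l^i\rfloor$ for $s=\lfloor n/t\rfloor$, so the wreath product $C_{l^m}\wr P$ does hit $S(n,K,l)$ for odd $l$. For $l=2$, however, your accounting of the factor $2^{n-\lfloor n/t\rfloor}$ is off when $t=2$: one involution per $t$-dimensional block contributes only $2^{\lfloor n/t\rfloor}$, and you need in addition $-1$ acting on each of the $n-t\lfloor n/t\rfloor$ leftover coordinates to reach the stated exponent. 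You should also check that the relevant involution is the nontrivial element of $\operatorname{Gal}(K(\zeta_{2^m})/K)$ — which is whichever of the three involutions of $\operatorname{Gal}(\QQ(\zeta_{2^m})/\QQ)$ fixes $K\cap\QQ(\zeta_{2^m})$, not necessarily $\zeta\mapsto\bar\zeta$ — and that together with multiplication by $\zeta_{2^m}$ it generates a group of order exactly $2^{m+1}$.

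The genuine gap is Step 3. You assert that the $K$-rational decomposition of $K^n$ as a $K[G]$-module forces $G$, after a $K$-rational change of basis, into the wreath-product shape of Step 2; but this is precisely the content of the conjugacy statement and you give no argument for it. Monomiality of complex irreducibles of $l$-groups yields induction from a one-dimensional character of \emph{some} subgroup, not immediately a cyclic group permuting blocks of the ambient space, and it says nothing a priori about realizability over $K$: you must show that the character field of each constituent lands inside $K(\zeta_{l^{m(K,l)}})$ (this is where $m(K,l)$ and $t(K,l)$ genuinely enter) and, for $l=2$, you must absorb constituents of Schur index $2$ — the paper's own example of $Q_8$ embedding in $\operatorname{GL}_2(\QQ[i])$ but not in $\operatorname{GL}_2(\QQ)$ shows this is not a formality. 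As written, Step 3 names the obstacles rather than overcoming them, so the proposal establishes the upper bound and (essentially) its sharpness, but not the Sylow-type conjugacy claim.
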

\begin{lemma}
	Assume that  $K$ is the field of fraction of a Principal Ideal Domain $\mathcal O\subset K$ and let $G\subset\operatorname{GL}_n(K)$ be a finite matrix group.
	Then $G$ is conjugate in $GL_n(K)$ to a subgroup of $GL_n(\mathcal{O}_K)$.
\end{lemma}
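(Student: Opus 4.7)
The plan is to build a $G$-invariant $\mathcal{O}$-lattice inside $K^n$ and use the structure theorem for finitely generated modules over a PID to replace the standard lattice by a free one which is stable under $G$. Changing basis from this new free basis to the standard basis of $K^n$ produces the conjugating matrix.

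More concretely, I would first set $L_0 := \mathcal{O}^n \subset K^n$, the standard $\mathcal{O}$-lattice, and then form
\[L := \sum_{g\in G} g\cdot L_0 \subset K^n.\]
Because $G$ is finite and each $g\cdot L_0$ is finitely generated over $\mathcal{O}$, the sum $L$ is a finitely generated $\mathcal{O}$-submodule of $K^n$. By construction $L$ contains $L_0$, so $L\otimes_{\mathcal{O}} K = K^n$, i.e.\ $L$ spans $K^n$ over $K$. Moreover $L$ is $G$-stable: for any $h\in G$ we have $h\cdot L = \sum_{g\in G} hg\cdot L_0 = L$, since $g\mapsto hg$ permutes $G$.

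The second step is to invoke the structure theorem. Since $L\subset K^n$, it is torsion-free as an $\mathcal{O}$-module, and as a finitely generated torsion-free module over the PID $\mathcal{O}$ it is free of some rank, which must equal $n$ because $L$ spans $K^n$. Choose an $\mathcal{O}$-basis $v_1,\dots,v_n$ of $L$, and let $M\in\operatorname{GL}_n(K)$ be the matrix whose columns are $v_1,\dots,v_n$ expressed in the standard basis. Then $M$ sends $\mathcal{O}^n$ isomorphically onto $L$, so for every $g\in G$ the matrix $M^{-1}gM$ preserves $\mathcal{O}^n$ and hence lies in $\operatorname{GL}_n(\mathcal{O})$. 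Thus $M^{-1}GM \subset \operatorname{GL}_n(\mathcal{O})$, as required.

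The main obstacle here is the second step: one needs the hypothesis that $\mathcal{O}$ is a PID precisely to guarantee that a finitely generated torsion-free $\mathcal{O}$-module of rank $n$ is actually free of rank $n$. Over a general Dedekind domain with nontrivial class group the lattice $L$ need only be projective, and then $L\cong\mathcal{O}^{n-1}\oplus\mathfrak{a}$ for some ideal class $[\mathfrak{a}]$, which may fail to be trivial; this is exactly the reason the statement is phrased for principal ideal domains and why in the introduction the authors single out number fields of class number one.
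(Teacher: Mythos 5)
Your proof is correct and follows essentially the same route as the paper: both form the $G$-stable lattice $\sum_{g\in G} g\,\mathcal{O}^n$, use the PID hypothesis to conclude it is free of rank $n$, and conjugate by the basis-change matrix. Your write-up is in fact somewhat more careful than the paper's (spelling out finite generation, torsion-freeness, and the rank computation), but there is no substantive difference in approach.
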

\begin{proof}
Consider the finitely generated $\mathcal O$-module of rank $n$
\[M := \sum_{g\in G} g\mathcal O^n.\]
Since $\mathcal O$ is Principal Ideal Domain  the
$\mathcal O$-module $M$ is isomorphic to $\mathcal O^n$. It means that there exists $A\in\operatorname{GL}_n(K)$ such that $M = A\mathcal{O}^n$. The module $M$ is stable under the action of $G$, i.e. for any $g\in G$ we have $gM=M$ and consequently $A^{-1}gA\in \operatorname{GL}_n(\mathcal O)$. The group $A^{-1}GA$ is a subgroup of $\operatorname{GL}_n(\mathcal O)$ conjugate to $G$.
\end{proof}

 For a number field $K$ we shall denote by $\mathcal O_K\subset K$ the ring of integers in $K$ i,e, the integral closure of $\ZZ$ in $K$.
\begin{proposition}\label{prop2}
    If $G$ is finite subgroup of $GL_n(K)$ and $\mathcal{O}_K$ is Principal Ideal Domain then $G$ is conjugate in $GL_n(K)$ to a subgroup of $GL_n(\mathcal{O}_K)$.
\end{proposition}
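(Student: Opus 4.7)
The plan is to observe that this proposition is essentially an immediate specialization of the preceding lemma, so the proof will consist mostly of checking that the hypotheses of that lemma are satisfied. The lemma was stated for an abstract principal ideal domain $\mathcal O\subset K$ whose field of fractions is $K$, and it concluded that every finite subgroup of $\operatorname{GL}_n(K)$ is conjugate inside $\operatorname{GL}_n(K)$ to a subgroup of $\operatorname{GL}_n(\mathcal O)$. To deduce the proposition, I would simply take $\mathcal O = \mathcal O_K$.

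The first step is to record the standard fact from algebraic number theory that for a number field $K$, the ring of integers $\mathcal O_K$ is an order in $K$ whose field of fractions is $K$ itself. Combined with the hypothesis that $\mathcal O_K$ is a principal ideal domain, this places us exactly in the setting of the preceding lemma. The second step is a one-line invocation of that lemma: for the given finite subgroup $G\subset\operatorname{GL}_n(K)$, the lemma produces a matrix $A\in\operatorname{GL}_n(K)$ with $A^{-1}GA\subset\operatorname{GL}_n(\mathcal O_K)$, which is precisely the assertion of the proposition.

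Because the proof is just a specialization, I do not expect any genuine obstacle; the only thing worth spelling out, if desired, is to briefly recall (or point to) why $\mathcal O_K$ has $K$ as its field of fractions, since that is the only hypothesis of the lemma that is not literally restated in the proposition. No new calculation with the module $M=\sum_{g\in G}g\mathcal O_K^n$ needs to be carried out, as the argument using the structure theorem for finitely generated modules over a PID has already been performed in the proof of the lemma.
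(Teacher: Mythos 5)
Your proposal is correct and matches the paper's intent exactly: the paper states Proposition \ref{prop2} immediately after the lemma without a separate proof, clearly treating it as the specialization $\mathcal O=\mathcal O_K$, and the only hypothesis needing verification is that $\mathcal O_K$ has fraction field $K$, which you correctly note. Nothing further is required.
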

If $K$ is an arbitrary number field then $\mathcal O_K$ may not be a Principal Ideal Domain, it is however always a Dedekind domain. Consequently, for any prime ideal $\mathfrak p\subset\mathcal O_K$ the localization $\mathcal O_{K,\mathfrak p}$ is a Principal Ideal Domain and any finite subgroup of $\operatorname{GL}_n(K)$ is conjugate to a subgroup in $\operatorname{GL}_n(\mathcal O_{K,\mathfrak p})$.

The ideal $\mathfrak p$ has norm
 $\mathcal{N}(\mathfrak p) := \mathcal{O}_K/\mathfrak{p}$. Reduction modulo the maximal ideal $\mathfrak{p}\mathcal O_{K,\mathfrak p}$ of $\mathcal{O}_{K,\mathfrak p}$ yields a homomorphism:

\[
    Gl_n(\mathcal{O}_{K,\mathfrak p}) \rightarrow Gl_n(\mathds{F}_{\mathcal{N}(\mathfrak p)})
.\]
If $\mathfrak p\subset\mathcal O_K$ is a prime ideal, then $\mathfrak p\cap \ZZ=p\ZZ$, where $p$ is a rational prime. In this situation $p\mathcal O_{K,\mathfrak p} = \mathfrak p^e$, where $e$ is a positive integer called ramification index of $\mathfrak p$.

\begin{lemma}[\mbox{[Lemma 9 from \cite{Guralnick}]}] \label{lem3}
    The kernel of the reduction homomorphism above has at most $p$-torsion. Any torsion element $g$ in the kernel has order $p^i$ for some $p^i\le \frac{ep}{p-1}$.
\end{lemma}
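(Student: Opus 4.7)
The plan is to analyze the eigenvalues in $\overline{K}_{\mathfrak{p}}$ of a torsion element $g$ in the kernel. Any such $g$ has the form $g = I + \pi^{k}M$ with $\pi$ a uniformizer of $\mathcal{O}_{K,\mathfrak{p}}$, $k \ge 1$, and $M \in M_{n}(\mathcal{O}_{K,\mathfrak{p}})$. If $g$ has finite order $N$, the minimal polynomial of $g$ divides $X^{N}-1$, which is separable in characteristic zero, so $g$ is diagonalizable over $\overline{K}$ with eigenvalues that are $N$-th roots of unity. The characteristic polynomial reduces to $(X-1)^{n}$ modulo $\mathfrak{p}$, so every eigenvalue $\lambda$ satisfies $v(\lambda - 1) \ge v(\pi)$ for any extension $v$ of $v_{\mathfrak{p}}$ to $\overline{K}_{\mathfrak{p}}$.

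To prove the kernel contains only $p$-torsion, I would rule out eigenvalues whose order is coprime to $p$. Suppose $\lambda$ were a primitive $m$-th root of unity with $m > 1$ and $\gcd(m,p)=1$. The product $\prod_{\zeta}(\zeta-1)$ over primitive $m$-th roots of unity equals $\pm \Phi_{m}(1)$, which is $\pm 1$ when $m$ is not a prime power and a prime $q \ne p$ when $m = q^{t}$. In both cases $\lambda - 1$ is a $\mathfrak{p}$-adic unit, contradicting $\lambda \equiv 1 \pmod{\mathfrak{p}}$. Hence every eigenvalue of $g$ is a $p$-power root of unity, and the order of $g$ is a power of $p$.

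For the quantitative bound, assume $g$ has order exactly $p^{i}$ with $i \ge 1$, so that some eigenvalue $\lambda$ of $g$ is a \emph{primitive} $p^{i}$-th root of unity. Normalise the extension of $v_{\mathfrak{p}}$ to $\overline{K}_{\mathfrak{p}}$ by $v(p)=1$, so $v(\pi) = 1/e$. The local cyclotomic extension $\QQ_{p}(\zeta_{p^{i}})/\QQ_{p}$ is totally ramified of degree $\varphi(p^{i}) = p^{i-1}(p-1)$ with $\zeta_{p^{i}} - 1$ as a uniformizer; hence $v(\lambda - 1) = 1/(p^{i-1}(p-1))$. Combining this with $v(\lambda-1) \ge v(\pi) = 1/e$ yields $e \ge p^{i-1}(p-1)$, i.e.\ $p^{i} \le ep/(p-1)$, which is the claimed inequality.

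The main technical obstacle is not the linear algebra but the two local inputs: that the norm $\Phi_{m}(1)$ forces $\zeta - 1$ to be a $\mathfrak{p}$-adic unit whenever $\zeta$ is a primitive $m$-th root of unity with $\gcd(m,p)=1$, and that $\zeta_{p^{i}}-1$ is a uniformizer in the totally ramified local extension $\QQ_{p}(\zeta_{p^{i}})/\QQ_{p}$. Both are classical facts about cyclotomic extensions of local fields; once they are in place, the argument reduces to comparing valuations of $\lambda - 1$ and $\pi$.
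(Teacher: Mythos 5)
The paper does not prove this lemma at all --- it is quoted verbatim as Lemma~9 of \cite{Guralnick} --- so there is no in-paper argument to compare against. Your proof is the standard Minkowski-style argument (essentially the one in Guralnick--Lorenz), and its overall architecture is sound: diagonalize a torsion element over $\overline{K}_{\mathfrak p}$, use $\Phi_m(1)$ to kill prime-to-$p$ torsion, and use the ramification of $\QQ_p(\zeta_{p^i})/\QQ_p$ to get the quantitative bound. Two points need repair, one of them substantive.

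The substantive one is the inequality $v(\lambda-1)\ge v(\pi)$, which carries the entire quantitative conclusion. You derive it from ``the characteristic polynomial reduces to $(X-1)^n$ mod $\mathfrak p$,'' but that hypothesis only gives $v(\lambda-1)>0$; a Newton-polygon computation on $f(X+1)=X^n+c_{n-1}X^{n-1}+\cdots+c_0$ with $v(c_j)\ge v(\pi)$ yields only $v(\lambda-1)\ge v(\pi)/n$, which would weaken the bound to $p^i\le enp/(p-1)$. The correct justification is already sitting in your first sentence: since $g-I=\pi^{k}M$ with $M$ integral and $I$ central, the eigenvalues of $g$ are exactly $1+\pi^{k}\mu_j$ where the $\mu_j$ are eigenvalues of $M$, hence algebraic integers with $v(\mu_j)\ge 0$; therefore $v(\lambda-1)\ge k\,v(\pi)\ge v(\pi)=1/e$. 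You should route the inequality through this observation rather than through the reduction of the characteristic polynomial. The second, minor, point: your exclusion of non-$p$-torsion only treats eigenvalues whose order is coprime to $p$, whereas an eigenvalue could a priori have order $p^a m$ with $a\ge1$ and $m>1$ coprime to $p$. This is fixed in one line --- either replace $g$ by $g^{p^a}$ (still in the kernel, now of order $m$) or note that $\zeta_{p^a}$ already reduces to $1$ in the residue field, so $\overline{\lambda}=1$ forces $\overline{\zeta_m}=1$ and the $\Phi_m(1)$ contradiction applies. With these two repairs the proof is complete and correct.
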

%
%\begin{proof}
%    For each $g \in GL_n((\mathcal{O}_K)_p)$, define $d(g) = \sup\{m \ | \ g - 1_{n \times n} \in M_n(p^m(\mathcal{O}_K)_p)\}$; so $d(g) = \infty$
%if and only if $g = 1_{n \times n}$ and $d(g) > 0$ if and only if $g$ belongs to the kernel of reduction homomorphism. Now assume that
%$0 < d = d(g) < \infty$ and write $g = 1_{n \times n} + \pi^d h$, where $\pi$ is a generator of the ideal $p(\mathcal{O}_K)_p$ and $h \in
%M_n((\mathcal{O}_K)_p) \setminus M_n(p(\mathcal{O}_K)_p)$. Then $g^r = 1_{n\times n} + \pi^d(rh + s)$ with $s = \prod\limits_{i=2}^l \pi^{d(i-1)}h^i \in M_n(p(\mathcal{O}_K)_p)$. If
%$(r, p) = 1$ then $rh + s \notin M_n(p(\mathcal{O}_K)_p)$ and so $g^r \neq 1_{n\times n}$. This shows that the kernel of reduction homomorphism has at most p-torsion.
%\end{proof}

\begin{corollary}
	Let $K$ be a number field, $\mathfrak p\subset\mathcal O_K$ a prime ideal with ramification index $e<p-1$. Then the kernel

	\[\ker\biggl(\operatorname{GL}_n(\mathcal O_{K,\mathfrak p}) \longrightarrow
	\operatorname{Gl}_n(\mathds{F}_{\mathcal{N}(\mathfrak p)}) \biggr)
	\]
	is torsion free. In particular every finite subgroup of
	$\operatorname{GL}_n(K)$ is isomorphic to a subgroup of
 $\operatorname{GL}_n(\mathds {F}_{\mathcal (N)(\mathfrak p)})$.
\end{corollary}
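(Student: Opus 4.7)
The plan is to deduce both statements directly from Lemma~\ref{lem3} together with the localization argument preceding Proposition~\ref{prop2}. The first claim is essentially a single arithmetic inequality, while the second is an application of conjugation into the ring of integers localized at~$\mathfrak p$.

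For torsion-freeness, I would pick any torsion element $g$ of the kernel. Lemma~\ref{lem3} guarantees that $g$ has $p$-power order $p^i$ for some $i\ge 0$, and that this order satisfies the bound
\[
p^i \le \frac{ep}{p-1}.
\]
Under the hypothesis $e<p-1$, the right-hand side is strictly less than $p$, so $p^i<p$, forcing $i=0$ and hence $g=\operatorname{id}$. This gives torsion-freeness of the kernel.

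For the ``in particular'' clause, given a finite subgroup $G\subset\operatorname{GL}_n(K)$, I would invoke the observation that $\mathcal O_{K,\mathfrak p}$ is a Principal Ideal Domain (as a localization of a Dedekind domain at a prime ideal), and hence by the lemma preceding Proposition~\ref{prop2} the group $G$ is conjugate inside $\operatorname{GL}_n(K)$ to a subgroup of $\operatorname{GL}_n(\mathcal O_{K,\mathfrak p})$. Composing this embedding with the reduction homomorphism produces a homomorphism $G\longrightarrow\operatorname{GL}_n(\mathds F_{\mathcal N(\mathfrak p)})$, and the first part of the corollary guarantees that its kernel is trivial, since every nontrivial element of the finite group $G$ is torsion while the kernel of reduction contains no nontrivial torsion element.

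I do not anticipate a genuine obstacle: the conceptual content is already packaged in Lemma~\ref{lem3} and in the localization/PID step, so all that remains is the elementary inequality $e<p-1\Longrightarrow ep/(p-1)<p$, plus the observation that a torsion subgroup mapping into a group whose kernel contains no nontrivial torsion must embed. The only point at which one must pause is to verify that the map factors through $\operatorname{GL}_n(\mathcal O_{K,\mathfrak p})$ after conjugation, which is exactly where the PID hypothesis on the localization is used.
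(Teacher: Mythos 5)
Your proposal is correct and matches the paper's intended argument exactly: the paper leaves this corollary as an immediate consequence of Lemma~\ref{lem3} (which gives the bound $p^i\le ep/(p-1)<p$ when $e<p-1$) together with the localization remark preceding it, namely that $\mathcal O_{K,\mathfrak p}$ is a PID so any finite subgroup of $\operatorname{GL}_n(K)$ is conjugate into $\operatorname{GL}_n(\mathcal O_{K,\mathfrak p})$ and then embeds via reduction since its intersection with the torsion-free kernel is trivial. Nothing is missing.
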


\begin{proposition}
	Let $\rho_1$ and $\rho_2$ be two faithful representations of a finite group $G$ in the same vector space $V$ with conjugate images in $\operatorname{GL}(V)$.
	Then there is an automorphism $\phi\in \operatorname{Aut}(G)$ such that representations $\rho_1$ and $\rho_2\circ\phi$ have equal character.
\end{proposition}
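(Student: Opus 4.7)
The plan is to use the conjugating element from $\operatorname{GL}(V)$ directly to build the automorphism $\phi$ of $G$. By hypothesis there exists $A\in\operatorname{GL}(V)$ such that $A\,\rho_1(G)\,A^{-1}=\rho_2(G)$. So for every $g\in G$ the matrix $A\rho_1(g)A^{-1}$ lies in the image of $\rho_2$, and since $\rho_2$ is faithful there is a unique $h\in G$ with $\rho_2(h)=A\rho_1(g)A^{-1}$. I would set $\phi(g):=\rho_2^{-1}\bigl(A\rho_1(g)A^{-1}\bigr)$.

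Next I would check that $\phi$ is an automorphism of $G$. It is a homomorphism because it is the composition of the group homomorphism $\rho_1$, the inner automorphism $X\mapsto AXA^{-1}$ of $\operatorname{GL}(V)$, and the group isomorphism $\rho_2^{-1}:\rho_2(G)\to G$. It is bijective because each of these three maps is bijective onto its target: $\rho_1:G\to\rho_1(G)$ is an isomorphism (faithfulness of $\rho_1$), conjugation by $A$ carries $\rho_1(G)$ onto $\rho_2(G)$ by assumption, and $\rho_2^{-1}:\rho_2(G)\to G$ is an isomorphism (faithfulness of $\rho_2$).

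Finally I would read off the character equality. By construction $\rho_2(\phi(g))=A\rho_1(g)A^{-1}$ for every $g\in G$, which means the representations $\rho_2\circ\phi$ and $\rho_1$ are conjugate in $\operatorname{GL}(V)$ via the single matrix $A$. Since trace is invariant under conjugation, $\chi_{\rho_2\circ\phi}(g)=\operatorname{trace}(A\rho_1(g)A^{-1})=\operatorname{trace}(\rho_1(g))=\chi_{\rho_1}(g)$ for all $g\in G$.

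There is essentially no obstacle here; the only subtlety worth flagging is that both faithfulness hypotheses are used, once to make $\phi$ well-defined (via $\rho_2$) and once to make it injective (via $\rho_1$). In fact the argument shows something slightly stronger than the statement, namely that $\rho_1$ and $\rho_2\circ\phi$ are isomorphic as representations, not merely character-equivalent.
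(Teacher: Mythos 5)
Your proof is correct and follows essentially the same route as the paper: the paper first replaces $\rho_2$ by a conjugate representation so that the two images coincide and then sets $\phi=\rho_2^{-1}\circ\rho_1$, which is exactly your $\phi(g)=\rho_2^{-1}(A\rho_1(g)A^{-1})$ with the conjugation folded into the definition. Your version is just more explicit about well-definedness and the trace computation.
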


\begin{proof}
	Replacing $\rho_2$ with an isomorphic (conjugate) representation, we may assume that images of $\rho_1$ and $\rho_2$ are equal. Now, $\rho_1$ and $\rho_2$ are isomorphisms of $G$ onto the same subgroup of $\operatorname{GL}(V)$, and we can define an automorphism $\phi\in\operatorname{Aut}(G)$ by $\phi:=\rho_2 ^{-1}\circ \rho_1$.
\end{proof}

\section{Algorithm}
\label{sec:alg}

    % \begin{lemma}
    %     Let's $\rho$ be representation defined over $L$. And let $K \subset L$. Then $\rho$ is a direct summand of a representation defined over $K$ iff $\sigma(\rho)$ is direct summand of this representation as well. Moreover numbers of copies of $\rho$ and $\sigma(\rho)$ in decomposition of $\rho$ are the same.
    % \end{lemma}
    % \begin{proof}
    %     If $\rho$ is direct summand of $\rho'$ and $\rho'$ is defined over $K$, then $\sigma(\rho') = \rho'$ for each $\sigma \in Gal(L/K)$. Now since $\rho$ is direct summand of $\rho'$, than $\sigma(\rho) $ is direct summand of $\sigma(\rho') = \rho'$. And if there are $n$ copies of $\rho$ in decomposition, than there are $n$ copies of $\sigma(\rho)$ as well.
    % \end{proof}

    In this section we shall describe main steps of the algorithm.
    Given a number field $K$ and a positive integers  compute all finite subgroups in $\operatorname{GL}_n(K)$. In each step of the algorithm we refer to an appropriate function in Magma code (sec. \ref{sec:magma})

    \begin{enumerate}\def\labelenumi{\arabic{enumi}.}
        \item  \texttt{schurBound:} \\
        Compute the Schur bound $S(n,K)$:  corollary \ref{cor:4.3} gives an easy explicit bound  $l_0$ much that for any prime $l\ge l_0$ we have $S(n,k,l)=1$, for a finite set of primes we compute $M(n,K,l)$ using the alternative descriptions of $m(K,l)$ and $t(K,l)$.

        \item \texttt{finiteFieldOrder:}\\
       	Find an ideal $\mathfrak p$ in the ring of integers $\mathcal O_K$ with the ramification index $e(\mathfrak p)<p-q$ and determine the prime power $q=\mathcal N(\mathfrak p)$

        \item \texttt{subgroupsNonchecked:}\\
        Find all subgroups of $GL_n(\mathds{F}_{q})$, up to isomorphism, with order dividing Schur's bound, where $q$ is the prime power from the previous step.
		\bigskip

		\noindent
        Now, we have a list of finite groups that may be isomorphic to a subgroup of $\operatorname{GL}_{3}{K}$. Then for each subgroup $G$ in the list we perform the following:
        \bigskip

        \item \texttt{characterSumsOfFaithfulRepresentations:}\\
        Compute all irreducible characters of $G$ of degree not exceeding $n$ and add characters from an orbit of the Galois group of the field $K$ over $\QQ$ multiplied  by the Schur index (of any character from the orbit).
		Compute sums of obtained characters which are faithful and have degree equal $n$.

		\item \texttt{autGroups:}\\
		Compute automorphisms of the group $G$, for each automorphism determine the induced permutation on the set of conjugacy classes of $G$, remove from the list characters that differ by an action of a permutation induced by an automorphism.

		\bigskip

		\noindent Images of representations afforded by characters from the constructed list \texttt{resCT} are all finite subgroups of $\operatorname{GL}(n,K($)).

		\bigskip

		\item Decompose the characters from the list \texttt{irrResCT}. For every irreducible summand construct an irreducible $G$-modules over $K$
		afforded by it. This yields a list \texttt{irrGModules} of irreducible $G$-modules over $K$.
		Decompose every character in the list  \texttt{resCT} as sums of characters of $G$-modules from \texttt{irrGModules}.  Using this decomposition compute representations affording all characters in \texttt{resCT}.
    \end{enumerate}
    It might happen that the character of constructed representation differs from a given character in \texttt{resCT}, the reason is that the computation of irreducible $G$-modules can produce $G$-modules over a field different from $K$. In this situation we use a slower method where we compute Absolutely Irreducible Modules, and then apply Wei restriction.

By the Stark–Heegner theorem \cite{Stark} there are nine imaginary quadratic fields $K$ whose ring of integers $\mathcal O_K$ is a Principal Ideals Domain
in the table we list some results of computations:
%	\begin{enumerate}
%		\item $S(\mathds{Q},3)=48$,
%		\item $S(\mathds{Q}(\sqrt{-1}), 3) = 384$.
%		\item $S(\mathds{Q}(\sqrt{-3}), 3) = 1296$,
%		\item $S(\mathds{Q}(\sqrt{-7}), 3) = 336 = 7\times48$,
%		\item $S(\mathds{Q}(\sqrt{-d}), 3) = 48$, for $d$=2,11,19,43,67,163
%	\end{enumerate}

\[\def\arraystretch{1.3}\def\arraystretch{1.3}
\begin{array}{c|c|c|c|c}
	n&K&S(n,K)&\parbox{4cm}{\centering\# finite subgroups of $\operatorname{GL}(n,K)$}
&\parbox{4cm}{\centering\# finite subgroups of  $\operatorname{SL}(n,K)$}\\\hline
2&\QQ&24&	10&5\\
3&\QQ&48&	32&11\\
4&\QQ&5760&	227&106\\
5&\QQ&11520&	955&226\\
3&\QQ[\sqrt{-1}]&384&178&28\\
3&\QQ[\sqrt{-2}]&96&48&16\\
3&\QQ[\sqrt{-3}]&1286&352&40\\
3&\QQ[\sqrt{-7}]&336&41&15\\
3&\QQ[\sqrt{-11}]&48&37&13\\
3&\QQ[\sqrt{-19}]&48&40&14\\
3&\QQ[\sqrt{-43}]&48&40&14\\
3&\QQ[\sqrt{-67}]&48&40&14\\
3&\QQ[\sqrt{-163}]&48&40&14\\
\end{array}\]

\subsection{Finite subgroups of $\operatorname{GL}(3,\QQ{\sqrt{-19}})$ and $\operatorname{SL}(3,\QQ{\sqrt{-19}})$}.

Although the Schur bound for $S(3,\QQ{\sqrt{-19}}) = S(3,\QQ) =48$, numbers of finite subgroups of  $\operatorname{GL}(3,\QQ{\sqrt{-19}})$ and $\operatorname{SL}(3,\QQ{\sqrt{-19}})$ are strictly larger that the numbers of finite subgroups of
$\operatorname{GL}(3,\QQ)$ and
$\operatorname{SL}(3,\QQ)$, respectively. Finite subgroups of $\operatorname{GL}(3,\QQ{\sqrt{-19}})$ are isomorphic to the following
\begin{multline*}
	C_1, C_2, C_2, C_2, C_3, C_4, C_4, C_2^2, C_2^2, C_2^2, S_3, S_3, C_6, C_6,
	C_6, C_2\times C_4, Q_8, Q_8, C_2^3,\\ D_4, D_4, D_4, D_6, D_6, D_6, D_6, C_3\rtimes C_4,
	C_3\rtimes C_4, A_4, C_2\times C_6, C_2\times Q_8, C_2\times D_4, C_2\times C_3:C_4, \\
	\operatorname {SL}(2,3), C_2^2\times S_3, S_4, S_4, C_2\times A_4, C_2\times \operatorname {SL}(2,3),
	C_2\times S_4
\end{multline*}
Two (non-isomorphic) groups in that list realize the Schur bound $S(3,\QQ{\sqrt{-19}})=48$
\begin{multline*}
C_2\times\operatorname{SL}(2,3) \cong
\left\langle \begin{pmatrix}
	1&0&0\\0&-2&\tfrac12(\sqrt{-19}-3)\\0&\tfrac12(\sqrt{-19}+3)&3
\end{pmatrix},
\begin{pmatrix}
	1&0&0\\0&\tfrac12(\sqrt{-19}+3) & 3\\0&\tfrac12(\sqrt{-19}+1)&\tfrac12(\sqrt{-19}-3)
\end{pmatrix},
\right.
\\
\left.
\begin{pmatrix}
	-1&0&0\\0&\tfrac12(-\sqrt{-19}+3)&\tfrac12(-\sqrt{-19}-1)\\
	0&-3&\tfrac12(\sqrt{-19}-3)
\end{pmatrix}.
\begin{pmatrix}
	1&0&0\\0&-1&0\\0&0&-1
\end{pmatrix}.
\begin{pmatrix}
	-1&0&0\\0&-1&0\\0&0&-1
\end{pmatrix}\right\rangle
\end{multline*}
and
\begin{multline*}
	C_2\times S_4 \cong
\left\langle 	\begin{pmatrix}
		-1& 0& 0\\
		0& 0& 1\\
		0& 1& 0
	\end{pmatrix},
	\begin{pmatrix}
		0& 1& 0\\
		0& 0& 1\\
		-1& 0& 0
	\end{pmatrix},
	\begin{pmatrix}
		1&-1& 1\\
		0& 0& 1\\
		0& 1& 0,
	\end{pmatrix},
	\begin{pmatrix}
		0&-1& 0\\
		-1& 0& 0\\
		-1& 1&-1
	\end{pmatrix},
	\begin{pmatrix}
		-1& 0& 0\\
		0&-1& 0\\
		0& 0&-1
	\end{pmatrix}
	\right\rangle
\end{multline*}
	The character of the group $C_2\times S_4$  equals
	\[\arraycolsep=1mm
	\begin{array}{l|cccccccccccccc}
		\chi_{1}& 1& 1& 1& 1& 1& 1& 1& 1& 1& 1& 1& 1& 1& 1 \\
		\chi_{2}& 1& -1& -1& 1& 1& 1& 1& -1& 1& -1& -1& -1& -1& 1 \\
		\chi_{3}& 1& 1& 1& 1& -\zeta_3 - 1& \zeta_3& 1& 1& -\zeta_3 - 1& \zeta_3&
		-\zeta_3 - 1& \zeta_3& -\zeta_3 - 1& \zeta_3 \\
		\chi_{4}& 1& 1& 1& 1& \zeta_3& -\zeta_3 - 1& 1& 1& \zeta_3& -\zeta_3 - 1&
		\zeta_3& -\zeta_3 - 1& \zeta_3& -\zeta_3 - 1 \\
		\chi_{5}& 1& -1& -1& 1& \zeta_3& -\zeta_3 - 1& 1& -1& \zeta_3& \zeta_3 + 1&
		-\zeta_3& \zeta_3 + 1& -\zeta_3& -\zeta_3 - 1 \\
		\chi_{6}& 1& -1& -1& 1& -\zeta_3 - 1& \zeta_3& 1& -1& -\zeta_3 - 1&
		-\zeta_3& \zeta_3 + 1& -\zeta_3& \zeta_3 + 1& \zeta_3 \\
		\chi_{7}& 2& 2& -2& -2& -1& -1& 0& 0& 1& 1& 1& -1& -1& 1 \\
		\chi_{8}& 2& -2& 2& -2& -1& -1& 0& 0& 1& -1& -1& 1& 1& 1 \\
		\chi_{9}& 2& -2& 2& -2& \zeta_3 + 1& -\zeta_3& 0& 0& -\zeta_3 - 1& -\zeta_3&
		\zeta_3 + 1& \zeta_3& -\zeta_3 - 1& \zeta_3 \\
		\chi_{10}& 2& 2& -2& -2& -\zeta_3& \zeta_3 + 1& 0& 0& \zeta_3& -\zeta_3 - 1&
		\zeta_3& \zeta_3 + 1& -\zeta_3& -\zeta_3 - 1 \\
		\chi_{11}& 2& 2& -2& -2& \zeta_3 + 1& -\zeta_3& 0& 0& -\zeta_3 - 1& \zeta_3&
		-\zeta_3 - 1& -\zeta_3& \zeta_3 + 1& \zeta_3 \\
		\chi_{12}& 2& -2& 2& -2& -\zeta_3& \zeta_3 + 1& 0& 0& \zeta_3& \zeta_3 + 1&
		-\zeta_3& -\zeta_3 - 1& \zeta_3& -\zeta_3 - 1 \\
		\chi_{13}& 3& -3& -3& 3& 0& 0& -1& 1& 0& 0& 0& 0& 0& 0 \\
		\chi_{14}& 3& 3& 3& 3& 0& 0& -1& -1& 0& 0& 0& 0& 0& 0
	\end{array}\]
	where $\zeta_3 = \exp(\frac{2\pi i}3)$ is a third root of unity.

	Characters $\chi_7$ and $\chi_8$ have the Schur index over $\QQ$ equal to 2, while their Schur index over $\QQ(\sqrt{-19})$ (also over $\QQ(\sqrt{-d})$ for all Heegner numbers except 7) equals 1. Kernels of $\chi_7$ and $\chi_8$ are groups of order 2 generated by matrices
	\[\left(\begin{array}{rrr}
		1&0&0\\0&-1&0\\0&0&-1
	\end{array}\right)
	\qquad \text{and} \qquad
	\left(\begin{array}{rrr}
		-1&0&0\\0&-1&0\\0&0&-1
	\end{array}\right)
	\]
	respectively. Sums of characters
	\[
	\begin{array}{c|cccccccccccccc}
		\chi_2+\chi_7 &  3& 1& -3& -1& 0& 0& 1& -1& 2& 0& 0& -2& -2& 2 \\
		\chi_2+\chi_8 &  3& -3& 1& -1& 0& 0& 1& -1& 2& -2& -2& 0& 0& 2
	\end{array}
	\]
	 are faithful. They differ by an automorphism of the group $C_2\times \operatorname{SL}(2,3)$, consequently they define different embedding of that group with equal images. Note, that the kernel of the character $\chi_2$ equals $\operatorname{SL}(2,3)$.

	 Finite subgroups of $\operatorname{GL}(3,\QQ(\sqrt{-19}))$  that are not conjugate to a subgroup of $\operatorname{GL(3,\QQ)}$ are isomorphic to
	 \[ Q_8, Q_8, C_3\rtimes C_4, C_3\rtimes C_4, C_2\times Q_8, C_2\times C_3\rtimes C_4, SL(2,3),
	 C_2\times \operatorname{SL}(2,3)
	 \]

\section{MAGMA}
\label{sec:magma}
We have implemented the algorithm presented in the previous section in the computer algebra system \texttt{MAGMA}
\lstset{language=Magma,breaklines=false,breakatwhitespace=true,basicstyle=\tiny, comment=[s]{/*}{*/},commentstyle=\tiny\emph, showstringspaces=false, keepspaces=true, breaklines=true,  tabsize=2 }

 \lstinputlisting{FG3.magma}

\end{document}